\newtheorem{thm}{Theorem}[section]
\newtheorem{lem}[thm]{Lemma}
\newtheorem{claim}[thm]{Claim}
\newenvironment {proof} {\noindent{\em Proof.}}{\hspace*{\fill}$\Box$\par\vspace{4mm}}
\newcommand{\ml}{l\kern-0.55mm\char39\kern-0.3mm}
\title{\textbf{Some results on the rainbow vertex-disconnection colorings of  graphs
}}
\author{{\small Yindi Weng } \\
{\small  Department of Mathematical Sciences, Zhejiang Sci-Tech University}\\
{\small  Hangzhou 310027, China}\\
{\small Email: wengyindi@zstu.edu.cn}\\
}
\date{}
\begin{document}
\maketitle
\begin{abstract}

Let $G$ be a nontrivial connected and vertex-colored graph. A vertex subset $X$ is called
rainbow if any two vertices in $X$ have distinct colors.
The graph $G$ is called \emph{rainbow vertex-disconnected} if for any two vertices $x$ and $y$
of $G$, there exists a vertex subset $S$ such that when $x$ and $y$ are nonadjacent,
$S$ is rainbow and $x$ and $y$ belong to different components of $G-S$; whereas when $x$ and $y$ are adjacent,
$S+x$ or $S+y$ is rainbow and $x$ and $y$ belong to different components of $(G-xy)-S$.
For a connected graph $G$, the \emph{rainbow vertex-disconnection number} of $G$, $rvd(G)$, is the minimum number of colors
that are needed to make $G$ rainbow vertex-disconnected.

In this paper,
we prove for any $K_4$-minor free graph,
$rvd(G)\leq \Delta(G)$ and the bound is sharp.
We show it is $NP$-complete to determine the rainbow vertex-disconnection number for bipartite graphs and split graphs.
Moreover,
we show for every $\epsilon>0$,
it is impossible to efficiently approximate the rainbow vertex-disconnection number of any bipartite graph and split graph within a factor of $n^{\frac{1}{3}-\epsilon}$ unless $ZPP=NP$.

\noindent\textbf{Keywords:} vertex-coloring, rainbow vertex-cut, rainbow vertex-disconnection number, complexity, inapproximability

\noindent\textbf{AMS subject classification 2010:} 05C15, 05C40.
\end{abstract}

\section{Introduction}

All graphs considered in this paper are simple, finite and undirected. Let $G=(V(G), E(G))$ be a
nontrivial connected graph with vertex set $V(G)$ and edge set $E(G)$. The $order$ of $G$ is denoted
by $n=|V(G)|$. For a vertex $v\in V$, the \emph{open neighborhood} of $v$ in $G$ is the set $N_{G}(v)=\{u\in V(G) | uv\in E(G)\}$ and
$d_{G}(v)=|N_{G}(v)|$ is the \emph{degree} of $v$ in $G$, and the \emph{closed neighborhood} in $G$ is the set $N_{G}[v]=N_{G}(v)\cup \{v\}$. The minimum and maximum
degree of $G$ are denoted by $\delta(G)$ and $\Delta(G)$, respectively. Let $P_n$ be a path with order $n$. We follow \cite{BM} for graph theoretical notation and terminology not defined here.

In graph theory, there are two ways (path and cut) to study the connectivity of graph.
For colored graphs, there are many concepts, such as rainbow connection coloring, proper connection coloring and so on, which study colored connectivity from colored paths.
Chartrand et al. \cite{GC} studied the rainbow edge-cut by introducing the concept
of rainbow disconnection of graphs.
They first researched the colored connectivity from the perspective of colored edge-cut.

Based on it,
Bai et al. \cite{BCLLW} researched the colored connectivity from the perspective of colored vertex-cut.
They introduced the concept of the rainbow vertex-disconnection
number, which can be applied to frequency assignment problem and the interception of goods.

For a connected and vertex-colored graph $G$, let $x$
and $y$ be two vertices of $G$.
If $x$ and $y$ are nonadjacent, then an $x$-$y$
\emph{vertex-cut} is a subset $S$ of
$V(G)$ such that $x$ and $y$ belong
to different components of $G-S$.
If $x$ and $y$ are adjacent, then an $x$-$y$
\emph{vertex-cut} is a subset $S$ of
$V(G)$ such that $x$ and $y$ belong
to different components of $(G-xy)-S$.
A vertex subset $S$ of
$G$ is \emph{rainbow} if no two vertices of $S$ have
the same color. An $x$-$y$ \emph{rainbow vertex-cut} is an $x$-$y$ vertex-cut $S$
such that if $x$ and $y$ are nonadjacent, then $S$ is rainbow; if $x$ and $y$ are adjacent,
then $S+x$ or $S+y$ is rainbow.

A vertex-colored graph $G$ is called \emph{rainbow
vertex-disconnected} if for any two vertices $x$ and $y$
of $G$, there exists an $x$-$y$ rainbow vertex-cut.
In this case, the vertex-coloring $c$ is
called a \emph{rainbow vertex-disconnection coloring}
of $G$. For a connected graph $G$, the \emph{rainbow vertex-disconnection number}
of $G$, denoted by $rvd(G)$, is the
minimum number of colors that are needed to make $G$
rainbow vertex-disconnected. A rainbow vertex-disconnection
coloring with $rvd(G)$ colors is called an
$rvd$-\emph{coloring} of $G$.

An \emph{injective coloring} of graph $G$ is a vertex-coloring of graph $G$ so that the colors of any two vertices with a common neighbor are different. The \emph{injective chromatic number} $\chi_i(G)$ of a graph $G$ is the minimum number of colors such that there is an injective coloring. The injective coloring was first introduced by Hahn et al. in 2002 \cite{HKS} and originated from the complexity theory on Random Access Machines.

According to \cite{LW}, we have $$\delta(G)\leq rvd(G)\leq \chi_i(G)\leq \Delta(G)(\Delta(G)-1)+1.$$

A \emph{minor} of a graph $G$ is any graph obtainable from $G$ by means of a sequence of vertex and edge deletions and edge contractions.
We call $G$ \emph{$H$-minor free} if $G$ does not have $H$ as a minor.
A \emph{split} graph is a graph whose vertices
can be partitioned into a clique and an independent set.

Chen et al. \cite{CHRW} proved that every $K_4$-minor free graph $G$ with maximum degree $\Delta\geq 1$ has $\chi_i(G)\leq \lceil\frac{3}{2}\Delta\rceil$.
Jin et al. \cite{JXZ} considered the complexity of injective coloring.
They also showed if $ZPP\neq NP$, then for every $\epsilon>0$,
it is not possible to efficiently approximate $\chi_i(G)$ within a factor of $n^{\frac{1}{3}-\epsilon}$ for any bipartite graph $G$.
For rainbow vertex-disconnection colorings of graphs,
Chen et al. \cite{CLLW} showed that it is NP-complete to
decide whether a given vertex-colored graph G is rainbow vertex-disconnected, even though the graph $G$ has $\Delta(G)=3$ or is bipartite.
But how about the complexity of determining $rvd(G)$?

Inspired by these,
the paper is organized as follows.
In Section 2,
we consider the rainbow vertex-disconnection numbers of $K_4$-minor free graphs.
We prove for any $K_4$-minor free graph,
$rvd(G)\leq \Delta(G)$ and the bound is sharp.
It shows that there is a certain gap between $rvd(G)$ and $\chi_i(G)$ even if $G$ is $K_4$-minor free.
In Section 3,
we prove it is $NP$-complete to determine the rainbow vertex-disconnection number for bipartite graphs and split graphs.
Moreover,
we show for every $\epsilon>0$,
it is impossible to efficiently approximate the rainbow vertex-disconnection number of any bipartite graph and split graph within a factor of $n^{\frac{1}{3}-\epsilon}$ unless $ZPP=NP$.

\section{Graphs with $K_4$-minor free}

\begin{lem}\cite{BCLLW}\label{rvd1}
Let $G$ be a nontrivial connected graph. Then $rvd(G)=1$
if and only if $G$ is a tree.
\end{lem}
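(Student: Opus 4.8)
The plan is to use the simple observation that a vertex-coloring of $G$ with only one color makes every set of two or more vertices non-rainbow, so that under such a coloring the rainbow vertex-subsets of $G$ are exactly $\emptyset$ and the singletons. Both directions of the equivalence then follow quickly from this.

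For the ``only if'' direction, I would assume $rvd(G)=1$ and fix a rainbow vertex-disconnection coloring $c$ of $G$ using the single color. Let $xy$ be an arbitrary edge of $G$. By the definition of a rainbow vertex-disconnection coloring there is an $x$-$y$ vertex-cut $S$ for which $S+x$ or $S+y$ is rainbow; since under $c$ no two vertices form a rainbow set, we are forced to take $S=\emptyset$, and then $x$ and $y$ lie in different components of $G-xy$. Hence every edge of $G$ is a bridge, and a connected graph all of whose edges are bridges is precisely a tree.

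For the ``if'' direction, let $G$ be a nontrivial tree and color all of $V(G)$ with one color; I claim this coloring witnesses $rvd(G)\le 1$. If $x$ and $y$ are adjacent, then $xy$ is a bridge of the tree, so $x$ and $y$ belong to different components of $(G-xy)-\emptyset$ and $\emptyset+x=\{x\}$ is trivially rainbow. If $x$ and $y$ are nonadjacent, the unique $x$-$y$ path in $G$ contains at least one internal vertex $w$; then $S=\{w\}$ is rainbow and $x$ and $y$ lie in different components of $G-w$. Combining this with the trivial lower bound $rvd(G)\ge 1$ for a nontrivial graph (or, alternatively, with the chain $\delta(G)\le rvd(G)$ quoted in the introduction together with $\delta(T)=1$ for any nontrivial tree $T$) yields $rvd(G)=1$.

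There is no genuine obstacle in this argument; the only point requiring a little care is the adjacent case of the definition of a rainbow vertex-cut, where one first deletes the edge $xy$ and must verify that $S+x$ (or $S+y$)—rather than $S$ itself—is rainbow, which is automatic once $S=\emptyset$ has been forced in the ``only if'' direction and exhibited in the ``if'' direction.
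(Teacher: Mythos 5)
Your proof is correct; the paper itself only cites this lemma from \cite{BCLLW} without reproducing a proof, and your argument (forcing $S=\emptyset$ on each edge to show every edge is a bridge, and conversely cutting at a bridge or at an internal vertex of the unique path in a tree) is exactly the standard one for this result.
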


\begin{lem}\cite{BCLLW}\label{rvdcycle}
If $C_{n}$ is a cycle of order $n\geq 3$, then
$rvd(C_{n})=2$.
\end{lem}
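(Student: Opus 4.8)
The plan is to prove the two inequalities $rvd(C_n)\ge 2$ and $rvd(C_n)\le 2$ separately. The lower bound is immediate from Lemma \ref{rvd1}: a cycle is not a tree, so $rvd(C_n)\ge 2$. All the content therefore lies in exhibiting a single $2$-coloring under which every pair of vertices admits a rainbow vertex-cut.

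Before constructing the coloring I would isolate exactly what it means for a coloring $c$ to work on a cycle, since the two arcs joining a pair of vertices make the situation very rigid. Write $v_1v_2\cdots v_nv_1$ for the cycle. If $x,y$ are nonadjacent, the two $x$--$y$ arcs have disjoint nonempty interiors whose union is $V(C_n)\setminus\{x,y\}$; a minimum $x$--$y$ vertex-cut takes one interior vertex from each arc, and it can be made rainbow exactly when those two vertices can be given different colors. Hence a rainbow $x$--$y$ vertex-cut exists if and only if $V(C_n)\setminus\{x,y\}$ is not monochromatic (note that every $x$--$y$ cut avoids $x,y$, so if the complement is monochromatic no cut, of any size, can be rainbow). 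If $x,y$ are adjacent, deleting $xy$ turns $C_n$ into a path with endpoints $x,y$; deleting any single interior vertex $z$ separates them, and $S=\{z\}$ is a valid rainbow cut as soon as $c(z)\neq c(x)$ or $c(z)\neq c(y)$, which fails only when $c$ is monochromatic.

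This reduces the whole problem to two conditions on the $2$-coloring $c$: it must use both colors, and no nonadjacent pair $x,y$ may have $V(C_n)\setminus\{x,y\}$ monochromatic. The second condition says that no color class is contained in a nonadjacent pair; equivalently, every color class of size at most $2$ must consist of adjacent vertices. The construction I would use is therefore to color two consecutive vertices, say $v_1$ and $v_2$, with color $2$ and all remaining vertices with color $1$. Both color classes are then arcs: $\{v_1,v_2\}$ of size $2$ (adjacent) and $\{v_3,\dots,v_n\}$ of size $n-2$. For $n\ge 5$ the second class has size at least $3$ and cannot sit inside a pair; for $n=4$ it is the adjacent pair $\{v_3,v_4\}$; for $n=3$ there are no nonadjacent pairs at all. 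In every case both conditions hold, so $c$ is a rainbow vertex-disconnection coloring and $rvd(C_n)\le 2$.

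I expect the only genuine obstacle to be the nonadjacent case, together with the temptation to use the ``obvious'' proper $2$-coloring $1,2,1,2,\dots$, which in fact fails for even $n$: antipodal same-colored vertices leave a monochromatic complement, e.g.\ in $C_4$ the pair $v_1,v_3$ has both arc-interiors of the single color $2$. The point the proof must make carefully is thus that the correct coloring uses \emph{blocks} of consecutive colors rather than an alternating pattern, so that every small color class is forced to be a harmless adjacent pair. Once the characterization above is in hand, verifying the block coloring is routine casework on $n\in\{3,4\}$ versus $n\ge 5$.
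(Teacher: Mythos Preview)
Your proof is correct. The paper itself does not prove this lemma at all; it is quoted from \cite{BCLLW} without argument, so there is no in-paper proof to compare against. Your approach---lower bound from Lemma~\ref{rvd1}, upper bound via the ``block'' $2$-coloring $c(v_1)=c(v_2)=2$, $c(v_3)=\cdots=c(v_n)=1$---is a clean self-contained argument.

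One very minor wording point: your rephrasing ``every color class of size at most $2$ must consist of adjacent vertices'' is a little loose for singleton classes (a singleton is contained in a nonadjacent pair whenever $n\ge 4$), but your construction never produces a singleton class when $n\ge 4$, and for $n=3$ you correctly observe there are no nonadjacent pairs, so the verification goes through regardless. Your remark that the alternating proper $2$-coloring fails on $C_4$ is a nice sanity check and pinpoints exactly why the block coloring is the right choice.
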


\begin{lem}\cite{BCLLW}\label{rvdblock}
Let $G$ be a nontrivial connected graph, and let $B$ be
a block of $G$ such that $rvd(B)$ is maximum among all
blocks of $G$. Then $rvd(G)=rvd(B)$.
\end{lem}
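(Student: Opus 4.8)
The plan is to prove the two inequalities $rvd(G)\ge \max_B rvd(B)$ and $rvd(G)\le \max_B rvd(B)$, where the maximum ranges over all blocks $B$ of $G$; write $b$ for this maximum. For the lower bound, fix an $rvd$-coloring $c$ of $G$ with $rvd(G)$ colors and let $B$ be any block. I would show that the restriction $c|_{V(B)}$ is already a rainbow vertex-disconnection coloring of $B$. The key point is that a block is an induced subgraph, so two vertices $x,y\in V(B)$ are adjacent in $B$ exactly when they are adjacent in $G$; moreover, if $S$ is an $x$-$y$ rainbow vertex-cut of $G$, then $S\cap V(B)$ is an $x$-$y$ rainbow vertex-cut of $B$. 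Indeed, any $x$-$y$ path of $B$ (resp.\ of $B-xy$) avoiding $S\cap V(B)$ lies entirely in $B$ and hence is an $x$-$y$ path of $G$ (resp.\ of $G-xy$) avoiding $S$, contradicting that $S$ separates $x$ and $y$. Since $S\cap V(B)$ inherits rainbowness from $S$, we get $rvd(B)\le |c(V(B))|\le rvd(G)$; taking the maximum over $B$ gives $b\le rvd(G)$.

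For the upper bound I would build a global coloring with at most $b$ colors by gluing optimal colorings of the individual blocks along the block-cut tree $T$ of $G$. Recall that $T$ is a tree, that any two blocks meet in at most one vertex (necessarily a cut vertex), and that each block $B_i$ admits an $rvd$-coloring $c_i$ using $rvd(B_i)\le b$ colors from the palette $\{1,\dots,b\}$. Root $T$ at an arbitrary block and process the blocks in breadth-first order. Each non-root block $B_i$ is attached to its parent through a unique cut vertex $w_i$, whose color has already been fixed; compose $c_i$ with a color permutation $\pi_i$ so that $\pi_i(c_i(w_i))$ equals the fixed color of $w_i$, and assign $\pi_i\circ c_i$ to the (so far uncolored) remaining vertices of $B_i$. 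Because each block is entered through exactly one already-colored vertex, this is well defined, and since color permutations preserve rainbow vertex-disconnection colorings, the restriction of the resulting global coloring $c$ to every block is still an $rvd$-coloring of that block, and $c$ uses at most $b$ colors.

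It remains to check that $c$ is a rainbow vertex-disconnection coloring of $G$, and this verification is where the main work lies. For two vertices $x,y$ lying in a common block $B$, take an $x$-$y$ rainbow vertex-cut $S\subseteq V(B)$ supplied by $c|_{V(B)}$; I would argue that $S$ is still an $x$-$y$ rainbow vertex-cut of $G$. The point is that any $x$-$y$ path of $G$ that leaves $B$ must leave and re-enter $B$ through the same cut vertex (since each branch of $G$ hanging off $B$ is attached to $B$ at a single cut vertex), so every such path can be short-circuited to an $x$-$y$ path lying inside $B$ and using exactly the same cut vertices; if the original path avoided $S$, then so does the short-circuited one, contradicting that $S$ separates $x$ and $y$ in $B$ (the same argument applies verbatim in $G-xy$ when $x,y$ are adjacent). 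For two vertices $x,y$ lying in no common block, they are necessarily nonadjacent (every edge lies in a single block), and by the standard characterization of blocks there is a cut vertex $w\neq x,y$ with $x$ and $y$ in different components of $G-w$; then $S=\{w\}$ is a trivially rainbow $x$-$y$ vertex-cut. Hence every pair admits a rainbow vertex-cut, giving $rvd(G)\le b$ and, with the lower bound, $rvd(G)=b$.

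The step I expect to be the main obstacle is the ``same block'' case of the final verification: one must be careful that a rainbow vertex-cut found inside a block genuinely disconnects $x$ and $y$ in the whole graph, which is exactly where the tree structure of the blocks and the short-circuiting of excursions through cut vertices become essential. The adjacency convention (allowing $S+x$ or $S+y$ to be rainbow) also has to be tracked consistently through both the restriction and the gluing arguments, though in each case it follows the nonadjacent case with only cosmetic changes.
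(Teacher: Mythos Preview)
The paper does not prove this lemma; it is quoted from \cite{BCLLW} and used as a black box, so there is no in-paper argument to compare against. Your proposal is the standard proof and is correct in outline and in detail.

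One small simplification is available in the ``same block'' verification: you do not actually need to short-circuit excursions. Because every component of $G-V(B)$ is attached to $B$ at a single vertex, a simple path that left $B$ would have to re-enter through the \emph{same} vertex, which a simple path cannot do; hence every $x$-$y$ path with $x,y\in V(B)$ already lies entirely in $B$ (and likewise in $B-xy$ when $xy\in E(G)$). With that observation the check that a rainbow vertex-cut found inside $B$ remains one in $G$ is immediate. The rest of your argument --- restricting an $rvd$-coloring of $G$ to a block for the lower bound, and gluing optimal block colorings along the block--cut tree via color permutations for the upper bound, then handling pairs in distinct blocks by a separating cut vertex --- is exactly the expected route and goes through without difficulty.
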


\begin{lem}\label{rvddifcolor}\cite{BCLLW}
Let $G$ be a nontrivial connected graph, and let $u$ and $v$ be two vertices of $G$ having at least two common neighbors.
Then $u$ and $v$ receive different colors in any rvd-coloring of $G$.
\end{lem}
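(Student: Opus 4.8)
The plan is to obtain the conclusion by separating the two \emph{common neighbours} of $u$ and $v$, rather than $u$ and $v$ themselves. Fix an arbitrary $rvd$-coloring $c$ of $G$ and let $w_1,w_2$ be two distinct common neighbours of $u$ and $v$. Since $G$ is simple there are no loops, so $u,v,w_1,w_2$ are pairwise distinct, and $w_1uw_2$ and $w_1vw_2$ are two paths of length two joining $w_1$ to $w_2$, neither of which uses the edge $w_1w_2$ (whether or not that edge is present in $G$).

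First I would apply the rainbow vertex-disconnection property to the pair $\{w_1,w_2\}$, obtaining a $w_1$-$w_2$ rainbow vertex-cut $S$. By definition, $w_1$ and $w_2$ lie in different components of $G-S$ (when they are nonadjacent) or of $(G-w_1w_2)-S$ (when they are adjacent); in particular $w_1,w_2\notin S$, and $S$ must contain an internal vertex of every $w_1$-$w_2$ path of the relevant graph. Since $w_1uw_2$ and $w_1vw_2$ are such paths with unique internal vertices $u$ and $v$ respectively, this forces $u\in S$ and $v\in S$.

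It then remains to note that $S$ is rainbow. If $w_1w_2\notin E(G)$ this is immediate from the definition of a rainbow vertex-cut; if $w_1w_2\in E(G)$ then $S+w_1$ or $S+w_2$ is rainbow, and every subset of a rainbow set is rainbow, so $S$ is rainbow in this case too. As $u,v\in S$ and $S$ is rainbow, we conclude $c(u)\neq c(v)$, and since $c$ was an arbitrary $rvd$-coloring the lemma follows.

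I do not expect a genuine obstacle here; the argument is short once one thinks of cutting between $w_1$ and $w_2$ and exploiting the two length-two paths through $u$ and through $v$. The only place that needs a little care is the adjacent case of the definition of a rainbow vertex-cut: one must check that $w_1,w_2\notin S$ (so that the two length-two paths are indeed forced to be hit inside $S$) and that the rainbow condition imposed on $S+w_1$ or $S+w_2$ already guarantees that $S$ itself is rainbow.
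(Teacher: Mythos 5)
Your proof is correct: separating the two common neighbours $w_1,w_2$ forces both $u$ and $v$ into the resulting rainbow vertex-cut, and you handle the only delicate points (that $w_1,w_2\notin S$ and that $S$ itself is rainbow even in the adjacent case, being a subset of the rainbow set $S+w_1$ or $S+w_2$) correctly. The paper itself states this lemma without proof, citing \cite{BCLLW}, and your argument is precisely the standard one used there, so there is nothing to contrast.
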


In fact, Lemma \ref{rvddifcolor} holds true for any rainbow vertex-disconnection coloring of $G$.
For convenience, we think Lemma \ref{rvddifcolor} is for any rainbow vertex-disconnection coloring of $G$.

Let $S_G(x,y)$ be an $x$-$y$ rainbow vertex-cut in $G$.
Let $D_G(x,y)$ be the rainbow vertex set such that
if $x,y$ are adjacent,
then $S_G(x,y)+x\subseteq D_G(x,y)$ or $S_G(x,y)+y\subseteq D_G(x,y)$
and $D_G(x,y)$ is rainbow;
if $x,y$ are not adjacent,
then $S_G(x,y)\subseteq D_G(x,y)$
and $D_G(x,y)$ is rainbow.
In order to prove that a vertex-coloring of $G$ is a rainbow vertex-disconnection coloring,
for any two vertices $x,y$ of $G$,
we only need to find $D_G(x,y)$.
Every $K_4$-minor free graph contains a vertex with degree at most two\cite{D}.

We call $v$ a \emph{k-vertex} if $d_G(v)=k$. Define $T_G(u)=\{x|d_G(x)\geq 3$ such that either $ux\in E(G)$,
or there exists a $2$-vertex $z$ satisfying $uz,zx\in E(G)\}$.
Let $t_G(u)=|T_G(u)|$.
Let $x$ and $y$ be two vertices of graph $G$.
The set of all the 2-vertices which are adjacent to both $x$ and $y$ is denoted by $M_G(x,y)$.
Let $m_G(x,y)=|M_G(x,y)|$.
Let $u\sim v$ ($u\not\sim v$) denote that vertex $u$ and vertex $v$ are adjacent (not adjacent) in $G$.

Lih et al. \cite{LWZ} proved the following Lemma.

\begin{lem}\cite{LWZ}\label{rvdk4mlem}
Let $G$ be a $K_4$-minor free graph. Then one of the following holds:

$(i)$ $\delta(G)\leq 1$;

$(ii)$ there exist two adjacent $2$-vertices;

$(iii)$ there exists a vertex $u$ with $d_G(u)\geq 3$ such that $t_G(u)\leq 2$.
\end{lem}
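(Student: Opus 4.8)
\medskip
\noindent\textbf{Proof plan.}
The plan is to prove the dichotomy by contradiction: assume that (i) and (ii) both fail, and produce a vertex witnessing (iii). So suppose $\delta(G)\ge 2$ and $G$ has no two adjacent $2$-vertices; then every $2$-vertex of $G$ has both of its neighbours of degree at least $3$.

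The central idea is to pass to an auxiliary \emph{simple} graph $H$ that records exactly the relation defining $T_G$. Put $V(H)=\{v\in V(G):d_G(v)\ge 3\}$ and join $u,v\in V(H)$ whenever $uv\in E(G)$, or some $2$-vertex $z$ of $G$ satisfies $uz,zv\in E(G)$. By the definition of $T_G$ this gives $N_H(u)=T_G(u)$, hence $d_H(u)=t_G(u)$, for each $u\in V(H)$ (and $H$ has no loops since $G$ is simple). Thus (iii) is equivalent to the existence of $u\in V(H)$ with $d_H(u)\le 2$.

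The main step -- and the one I expect to be the only delicate point -- is to show that $H$ is $K_4$-minor free. Since the $2$-vertices of $G$ are pairwise non-adjacent with all neighbours of degree $\ge 3$, for each $2$-vertex $z$ one may pick an incident edge $e_z$; these $e_z$ are pairwise distinct and their union is a forest, so contracting them all at once is a legitimate minor operation and produces a multigraph $\tilde H$ on $V(H)$ whose simplification is precisely $H$. Since $\tilde H$ is a minor of $G$ and $K_4$ is simple, any $K_4$-minor of $H$ would be a $K_4$-minor of $\tilde H$, hence of $G$; therefore $H$ is $K_4$-minor free. It is essential here to keep $H$ simple, because the fact ``a $K_4$-minor free graph has a vertex of degree at most two'' fails for multigraphs (three parallel edges).

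It remains to handle the degenerate case $V(H)=\emptyset$: then $\delta(G)\ge 2$ forces $G$ to be $2$-regular, i.e.\ a disjoint union of cycles, each of length at least $3$ and hence containing two adjacent $2$-vertices, contradicting the failure of (ii). So $H$ is a nonempty simple $K_4$-minor free graph and, by \cite{D}, has a vertex $u$ with $d_H(u)\le 2$; this $u$ satisfies $d_G(u)\ge 3$ and $t_G(u)=d_H(u)\le 2$, which is alternative (iii), completing the contradiction. Everything outside the $K_4$-minor-freeness of $H$ is routine bookkeeping.
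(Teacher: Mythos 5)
Your argument is correct. Note, however, that the paper offers no proof of this lemma at all: it is quoted verbatim from the cited reference \cite{LWZ}, so there is no in-paper proof to compare against. What is worth pointing out is that your key device --- the auxiliary simple graph $H$ on the vertices of degree at least $3$ with $N_H(u)=T_G(u)$, obtained from $G$ by contracting, for each $2$-vertex, one of its two incident edges --- is exactly the construction the paper itself uses later, inside the proof of Claim~\ref{minorclaim}, where it is asserted without justification that the resulting graph inherits $K_4$-minor freeness from $G$. You supply precisely the missing justification: the chosen edges form a forest of stars (pairwise distinct because no two $2$-vertices are adjacent, and no two degree-$\ge 3$ vertices get merged because each star is centred at one of them), so $H$ is the simplification of a minor of $G$ and hence $K_4$-minor free; you are also right to insist on simplifying, since Duffin's degree bound \cite{D} is a statement about simple graphs. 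The endgame is handled cleanly: $V(H)=\emptyset$ together with $\delta(G)\ge 2$ forces $G$ to be a union of cycles, which have adjacent $2$-vertices, and otherwise a vertex of degree at most $2$ in $H$ witnesses alternative $(iii)$. The only cosmetic point is that one should read the definition of $T_G(u)$ as excluding $x=u$ (so that $H$ is loopless); with that convention your identification $d_H(u)=t_G(u)$ is exact.
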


To \emph{contract} an
edge $e$ of a graph $G$ is to delete the edge and then identify
its ends.
The resulting graph is denoted by $G/e$.

\begin{lem}\label{adjacenttwo}
Let $G$ be a 2-connected graph with order $n\geq 4$ and two adjacent 2-vertices $u,v$.
Then $rvd(G)\leq rvd(G/uv)$, where $uv$ is an edge of $G$.

\end{lem}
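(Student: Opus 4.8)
The plan is to start from an $rvd$-coloring $c$ of $H:=G/uv$ with $k=rvd(H)$ colors and to extend it to a rainbow vertex-disconnection coloring $c'$ of $G$ without using new colors. Write $a$ for the neighbor of $u$ other than $v$ and $b$ for the neighbor of $v$ other than $u$. Since $G$ is $2$-connected and $n\ge 4$ one gets $a\ne b$, so in $H$ the contracted vertex $w$ has exactly the two neighbors $a$ and $b$; moreover the cycle of $G$ through the $2$-vertices $u,v$ uses the edges $au,uv,vb$ and so survives the contraction as a cycle of $H$ of length at least $3$, whence $H$ is not a tree and $k\ge 2$ by Lemma~\ref{rvd1}. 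I set $c'(z)=c(z)$ for every $z\in V(G)\setminus\{u,v\}$ and color $u,v$ according to how $c(a),c(b)$ relate to $c(w)$: if $c(a)=c(w)$, put $c'(u)=c(w)$ and let $c'(v)$ be any color different from $c(w)$; symmetrically, if $c(b)=c(w)\ne c(a)$, put $c'(v)=c(w)$ and let $c'(u)$ be any color different from $c(w)$; and if $c(a)\ne c(w)\ne c(b)$, put $c'(u)=c'(v)=c(w)$. In every case at least one of $u,v$ receives color $c(w)$; fix such a vertex and call it $w'$. (The coloring genuinely has to split into cases: for $G=C_4$ one has $H=C_3$, and there is an $rvd$-coloring of $C_3$ for which one cannot give both $u$ and $v$ the color $c(w)$.)

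The verification uses one structural remark: since $H-w=G-\{u,v\}$, deleting a set $S\subseteq V(G)\setminus\{u,v\}$ from $G$ gives $H-S$ with $w$ replaced by the edge $uv$ in its place, while deleting $S$ together with one of $u,v$ gives $H-(S\cup\{w\})$ with the other of $u,v$ appended as a pendant or isolated vertex; neither operation changes whether two vertices of $V(G)\setminus\{u,v\}$ lie in the same component. With this, I would supply the required set $D_G(x,y)$ for each unordered pair $\{x,y\}$. If $x,y\notin\{u,v\}$: take a witnessing rainbow set $D_H(x,y)$ for the pair in $H$; if $w\notin D_H(x,y)$ keep it unchanged, and if $w\in D_H(x,y)$ replace $w$ by $w'$ --- this remains rainbow because $c'(w')=c(w)$, and it remains a cut by the structural remark. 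If $x\in\{u,v\}$ and $y\notin\{u,v,a,b\}$: then $w\not\sim_H y$, so $w\notin D_H(w,y)$, and $D_H(w,y)$ transports to $G$ verbatim. This reduces everything to the five pairs $\{u,v\}$, $\{u,a\}$, $\{u,b\}$, $\{v,a\}$, $\{v,b\}$.

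These I would settle with small explicit cuts. For $\{u,v\}$ use $\{a\}$ (which isolates $u$ in $G-uv$) together with $u$ or $v$; for $\{u,a\}$ use $\{v\}$ together with an endpoint, and for $\{v,b\}$ use $\{u\}$; the set $\{v,a\}$ isolates $u$ and $\{u,b\}$ isolates $v$, so these serve as the cuts for $\{u,b\}$ and $\{v,a\}$ respectively as soon as they are rainbow, i.e.\ $c'(v)\ne c'(a)$, resp.\ $c'(u)\ne c'(b)$. Running through the three color cases, these isolating cuts succeed in all of them except for the pair $\{v,a\}$ when $c(a)=c(b)=c(w)$, where $\{u,b\}$ is monochromatic. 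In that single degenerate case one first observes, using that $c$ is an $rvd$-coloring, that $a\not\sim b$: otherwise $\{w,a,b\}$ spans a triangle of $H$ in which the adjacent pair $\{w,b\}$ has no rainbow vertex-cut, since any $w$-$b$ vertex-cut of $H-wb$ must contain $a$ (the only remaining neighbor of $w$) and $c(a)=c(w)=c(b)$. Since $a\not\sim_H b$, a witnessing rainbow set $D_H(a,b)$ must contain $w$ (else the path $a$-$w$-$b$ survives in $H-D_H(a,b)$), so $S_0:=D_H(a,b)\setminus\{w\}$ is a rainbow $a$-$b$ vertex-cut of $G-\{u,v\}$ avoiding the color $c(w)$, and then $\{u\}\cup S_0$ is the desired $v$-$a$ rainbow vertex-cut in $G$.

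The heart of the argument, and the main obstacle, is this bookkeeping for the five short pairs: choosing the colors of $u,v$ so that some small isolating cut is rainbow in each color configuration, and, in the one configuration where the obvious isolating cut is monochromatic, using the hypothesis that $c$ is an $rvd$-coloring of $H$ to extract the auxiliary rainbow $a$-$b$ cut of $G-\{u,v\}$. Everything else is a routine transport of cuts from $H$ to $G$ via the observation that the contracted vertex $w$ plays the role of the path $uv$.
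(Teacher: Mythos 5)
Your proposal is correct and follows essentially the same route as the paper: extend an $rvd$-coloring of $H=G/uv$ back to $G$ by choosing the colors of $u,v$ according to how the colors of the contracted vertex and its two neighbors interact, transport the witnessing rainbow cuts for pairs away from $u,v$ (replacing the contracted vertex by whichever of $u,v$ inherited its color), settle the few pairs near $u,v$ with small isolating cuts, and in the degenerate all-monochromatic configuration fall back on the rainbow $a$-$b$ cut of $H$, which necessarily contains the contracted vertex. The only difference is bookkeeping: the paper splits into "at least two colors on $\{u,u_1,v_1\}$" versus "all three equal," while you split into three cases by which neighbor shares the contracted vertex's color, but the content is the same.
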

\begin{proof}
Let $N_G(u)=\{u_1,v\}$ and $N_G(v)=\{v_1,u\}$.
Since $G$ is 2-connected,
we consider $u_1\neq v_1$.
For convenience,
regard $G/uv$ as the graph $H$ obtained from graph $G$ by deleting the vertex $v$ and adding the edge $uv_1$.
Since $H$ is also 2-connected,
we have $rvd(H)\geq 2$.
Let $c_H$ be an rvd-coloring of $H$ and $|c_H|$ be the number of colors.

Consider there exist at least two colors in $\{u,u_1,v_1\}$ under $c_H$.
We extend $c_H$ to a vertex-coloring $c_G$ of graph $G$ as follows.
If $c_H(u)\neq c_H(v_1)$, color $v$ different from $c_H(u_1)$ and color $V(G)\setminus\{v\}$ with the same colors from $c_H$.
If $c_H(u)=c_H(v_1)$,
let $c_G(u)=c_H(u_1)$, $c_G(v)=c_H(v_1)$
and color $V(G)\setminus\{u,v\}$ with the same colors from $c_H$.
Obviously,
$N_G(u)$ and $N_G(v)$ are rainbow.
Now we claim $c_G$ is a rainbow vertex-disconnection coloring of $G$.
Let $x$ and $y$ be two vertices of graph $G$.
If $x\in\{u,v\}$, then $D_G(x,y)=N_G(x)$.
By symmetry,
consider $x,y\in V(G)\setminus\{u,v\}$.
If $u\in D_H(x,y)$,
then $D_G(x,y)=D_H(x,y)$ or $D_H(x,y)\cup\{v\}\setminus\{u\}$.
If $u\not\in D_H(x,y)$,
then $D_G(x,y)=D_H(x,y)$.
So $rvd(G)\leq |c_G|=|c_H|= rvd(H)$.

Consider $c_H(u)=c_H(u_1)=c_H(v_1)$ under $c_H$.
Then $u_1\not\sim v_1$ in $H$ and $G$.
We extend $c_H$ to a vertex-coloring $c_G$ of graph $G$ as follows.
Color $v$ different from $c_H(u)$ and color $V(G)\setminus\{v\}$ with the same colors from $c_H$.
Let $x$ and $y$ be two vertices of graph $G$.
If $x=u$, then $D_G(x,y)=N_G(x)$.
By symmetry,
consider $x,y\in V(G)\setminus\{u\}$.
We have $D_G(v,u_1)=D_H(u_1,v_1)$.
If $x=v$ and $y\in V(G)\setminus\{u,v,u_1\}$,
then $D_G(x,y)=D_H(u,y)$.
By symmetry,
consider $x,y\in V(G)\setminus\{u,v\}$.
We have $D_G(x,y)=D_H(x,y)$.
So $c_G$ is a rainbow vertex-disconnection coloring of $G$ and $rvd(G)\leq rvd(H)$.
\end{proof}

\begin{thm}
Let $G$ be a $K_4$-minor free graph. Then $rvd(G)\leq \Delta(G)$ and the bound is sharp.
\end{thm}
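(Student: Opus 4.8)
The plan is to establish $rvd(G)\le\Delta(G)$ by induction on $n=|V(G)|$, and then to exhibit a family attaining equality. For the base cases $n\le 3$ the graph is a path (a tree, $rvd=1$ by Lemma~\ref{rvd1}) or a triangle ($rvd=2=\Delta$ by Lemma~\ref{rvdcycle}). For the inductive step, if $G$ is not $2$-connected then by Lemma~\ref{rvdblock} $rvd(G)=rvd(B)$ for a block $B$ of maximum $rvd$; each block is $K_4$-minor free with $\Delta(B)\le\Delta(G)$ and strictly fewer vertices (a block of a connected non-$2$-connected graph on $n\ge 4$ vertices omits at least one vertex), so the induction hypothesis applies, and if all blocks are edges then $G$ is a tree and $rvd(G)=1$. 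So we may assume $G$ is $2$-connected with $n\ge 4$; then $\delta(G)\ge 2$, ruling out alternative $(i)$ of Lemma~\ref{rvdk4mlem}. If $G$ has two adjacent $2$-vertices $u,v$, then Lemma~\ref{adjacenttwo} gives $rvd(G)\le rvd(G/uv)$, and $G/uv$ is $K_4$-minor free on $n-1$ vertices with $\Delta(G/uv)\le\Delta(G)$ (the contracted vertex has degree at most $2$ and no other degree increases), so the induction hypothesis finishes this case.

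Otherwise $(i)$ and $(ii)$ fail, so alternative $(iii)$ holds: some vertex $u$ has $d_G(u)\ge 3$ and $t_G(u)\le 2$. Since $N_G(u)\subseteq T_G(u)$ would force $d_G(u)\le 2$, the vertex $u$ has a $2$-vertex neighbor $z$; its other neighbor $a$ has $d_G(a)\ge 3$ (else $z,a$ are adjacent $2$-vertices), so $a\in T_G(u)$. I would contract the edge $za$. The graph $H=G/za$ is $K_4$-minor free on $n-1$ vertices, and $\Delta(H)\le\Delta(G)$: the image $a'$ of $a$ has degree $d_G(a)-1+[u\not\sim a]\le\Delta(G)$, the degree of $u$ is $d_G(u)-[u\sim a]\le\Delta(G)$, and every other degree is unchanged. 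By the induction hypothesis $H$ has an $rvd$-coloring $c'$ with at most $\Delta(G)$ colors.

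Now extend $c'$ to a coloring $c$ of $G$: every $w\ne z$ keeps the color of its image in $H$ (so $c(a)=c'(a')$), and $c(z)$ is chosen distinct from the colors of all vertices of $N_G(u)\cap N_G(a)$ other than $z$; this is possible because $|N_G(u)\cap N_G(a)|\le d_G(u)\le\Delta(G)$, and by Lemma~\ref{rvddifcolor} these are precisely the vertices required to differ from $z$. The crucial observation is that $u\in N_H(a')$, so $c'(u)\ne c'(a')$ and hence $c(u)\ne c(a)$; thus $N_G(z)=\{u,a\}$ is rainbow, and $z$ has a rainbow vertex-cut to every other vertex of $G$ (the set $\{u,a\}$ when that vertex is nonadjacent to $z$, and $\{a\}$ or $\{u\}$ in the two adjacent cases, all independent of $c(z)$). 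For any pair $x,y$ other than $z$, take a rainbow vertex-cut of $H$ for the corresponding pair and replace $a'$ by $a$ if it occurs: since every path of $G$ through $z$ has the form $\cdots-u-z-a-\cdots$ and collapses to the corresponding walk of $H$, a set separating the pair in $H$ still separates $x$ and $y$ in $G$, and its colors are unchanged. Hence $c$ witnesses $rvd(G)\le\Delta(G)$.

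I expect case $(iii)$ to be the main obstacle, since the transfer of rainbow vertex-cuts from $H$ to $G$ must be checked carefully for both adjacent and nonadjacent pairs, the possible coincidence $u\sim a$ (which creates a multi-edge $ua'$ to be simplified) must be accommodated, and one must confirm that choosing $c(z)$ never pushes the number of colors past $\Delta(G)$. For sharpness, take $G=K_{2,m}$ with $m\ge 2$: it is series--parallel, hence $K_4$-minor free, with $\Delta(K_{2,m})=m$, and its $m$ vertices of degree $2$ pairwise share the two vertices of degree $m$ as common neighbors, so by Lemma~\ref{rvddifcolor} any rainbow vertex-disconnection coloring uses at least $m$ colors; together with the bound just proved, $rvd(K_{2,m})=m=\Delta(K_{2,m})$.
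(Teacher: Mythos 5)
There is a genuine gap, and it sits at the exact point where the paper's proof has to work hardest. Your key step asserts that because $u\in N_H(a')$ we get $c'(u)\neq c'(a')$, hence $N_G(z)=\{u,a\}$ is rainbow. But a rainbow vertex-disconnection coloring is not a proper coloring: adjacent vertices may receive the same color (trees have $rvd=1$; cycles have $rvd=2$ with many adjacent equal-colored pairs), and nothing in the induction hypothesis prevents $c'(u)=c'(a')$. If $c(u)=c(a)$, then $\{u,a\}$ is not rainbow and your cuts separating $z$ from nonadjacent vertices collapse; since $z$ has only these two neighbors, there is no cheap repair. This color-coincidence problem is precisely why the paper does not stop at a single contraction: its Claim \ref{reop} and the four cases that follow are entirely devoted to recoloring or restructuring so that enough of $\{u,u_1,u_2\}$ become rainbow, and Lemma \ref{adjacenttwo} (the one contraction the paper does perform) is only proved for two \emph{adjacent} $2$-vertices, where the coincidence can be repaired locally.

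A second, independent failure concerns the pair $(u,a)$ itself. Every $u$--$a$ vertex-cut in $G$ must meet the path $u$-$z$-$a$, i.e.\ must contain $z$; but every cut you lift from $H$ lives in $V(H)=V(G)\setminus\{z\}$, so it cannot separate $u$ from $a$ in $G$ (whether or not $u\sim a$). Fixing this forces $z$ into the cut, which in turn constrains $c(z)$ against the colors of that cut --- a constraint your choice of $c(z)$ (avoiding only $N_G(u)\cap N_G(a)$) does not address, and which can conflict with the at-most-$\Delta(G)$ budget when $d_G(a)$ is large. Note also that your argument never really uses $t_G(u)\leq 2$: if a single contraction at an arbitrary $2$-vertex sufficed, the structural Lemma \ref{rvdk4mlem} and the paper's entire case analysis would be unnecessary, which is a strong signal that the shortcut cannot be sound. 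Your block reduction, the adjacent-$2$-vertices case via Lemma \ref{adjacenttwo}, and the sharpness example $K_{2,m}$ all match the paper and are fine; the inductive step for alternative $(iii)$ is where the proof must be substantially rebuilt.
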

\begin{proof}
When $\Delta(G)\leq 2$, the graph $G$ is a path or cycle. By Lemmas \ref{rvd1} and \ref{rvdcycle}, we have $rvd(G)\leq \Delta(G)$.
So consider $\Delta(G)\geq 3$.

We prove the result by induction on the order of graph $G$. When $n=4$, the graph $G$ is $K_4-{e}$ or a triangle with one pendant edge or $K_{1,3}$.
Obviously, $rvd(G)\leq 3$.
Assume $n\geq 5$ and the theorem holds for any $K_4$-minor free graph $\widetilde{G}$ with $|\widetilde{G}|<|G|$.
By Lemma \ref{rvdblock},
we only need to consider that $G$ is 2-connected and $\delta(G)\geq 2$.
If $G$ has two adjacent $2$-vertices $u$ and $v$,
then $rvd(G)\leq rvd(G/uv)\leq \Delta(G/uv)\leq \Delta(G)$ by Lemma \ref{adjacenttwo} and induction hypothesis.
So by Lemma \ref{rvdk4mlem},
consider that $G$ has no adjacent $2$-vertices and there exists a vertex $u$ with $d_G(u)\geq 3$ such that $t_G(u)\leq 2$.

If $t_G(u)=0$,
all the neighbors of $u$ are 2-vertices and there exist adjacent $2$-vertices.
It is a contradiction.
If $t_G(u)=1$, assuming that $T_G(u)=\{u_1\}$, then all the neighbors of $u$ are $u_1$ or some neighbors of $u_1$.
Since $G$ is 2-connected,
$G$ is $K_{2,n-2}$ or $K_{2,n-2}+\{uu_1\}$.
Obviously,
$rvd(G)=\Delta(G)$.

So $t_G(u)=2$ and any vertex $v$ with degree at least $3$ has $t_G(v)\geq 2$.
Assume that $T_G(u)=\{u_1,u_2\}$. Then all the neighbors of $u$ are $u_1$, $u_2$ or some neighbors of $u_1$ or $u_2$.

Let $S=\{v|d_G(v)\geq 3$ and $t_G(v)=2\}$.
For any vertex $v\in S$,
let $T_G(v)=\{v_1,v_2\}$.

\begin{claim}\label{minorclaim}
There exists a vertex $v$ in $S$ satisfying that if $v_1\sim v_2$,
then $t_G(v_1)\in\{2,3\}$ or $t_G(v_2)\in\{2,3\}$;
if $v_1\not\sim v_2$,
then $t_G(v_1)=2$ or $t_G(v_2)=2$.
\end{claim}
\begin{proof}
Suppose not.
For any vertex $v\in S$,
if $v_1\sim v_2$,
then $t_G(v_1)\geq 4$ and $t_G(v_2)\geq 4$;
if $v_1\not\sim v_2$,
then $t_G(v_1)\geq 3$ and $t_G(v_2)\geq 3$.
We construct a new graph $H$ from $G$.
Let $V(H)=\{v|d_G(v)\geq 3\}$
and $E(H)=\{xy|x,y\in V(H)$ and $y\in T_G(x)\}$.
Obviously,
for any 2-vertex $v$ in $H$,
if $v_1\sim v_2$, then $d_H(v_1)\geq 4$ and $d_H(v_2)\geq 4$;
if $v_1\not\sim v_2$,
then $d_H(v_1)\geq 3$ and $d_H(v_2)\geq 3$.
For every 2-vertex $v$ in $H$,
contract the edge $vv_1$ in $H$.
We obtain a new graph $H'$ from $H$ with minimum degree at least three.
Then $H'$ is not $K_4$-minor free.
It is a contradiction.
\end{proof}

Reselect vertex $u$ with $t_G(u)=2$ satisfying Claim \ref{minorclaim}.
Without loss of generality,
assume that $t_G(u_1)\in\{2,3\}$ for $u_1\sim u_2$ and $t_G(u_1)=2$ for $u_1\not\sim u_2$.
Consider $u_1\sim u_2$ and $t_G(u_1)=2$.
If $t_G(u_2)\geq 3$,
then $u_2$ is a cut vertex, which is a contradiction.
If $t_G(u_2)=2$,
then the graph $G$ is as shown in figure \ref{k4minor}.
We give a vertex-coloring $c_G$ of $G$ as follows.
Let $c_G(u)=1,c_G(u_1)=2$ and $c_G(u_2)=3$.
Color $M_G(u_1,u_2)$ different from $u,u_2$ and rainbow.
If $u\not\sim u_1$,
color $M_G(u,u_1)$ different from $u_2$ and rainbow;
otherwise,
color $M_G(u,u_1)$ different from $u,u_2$ and rainbow.
If $u\not\sim u_2$,
color $M_G(u,u_2)$ different from $u_1$ and rainbow;
otherwise,
color $M_G(u,u_2)$ different from $u,u_1$ and rainbow.
Obviously,
$c_G$ is a rainbow vertex-disconnection coloring of $G$ with at most $\Delta(G)$ colors.
So $rvd(G)\leq \Delta(G)$.
Thus, $t_G(u_1)=3$ for $u_1\sim u_2$ and $t_G(u_1)=2$ for $u_1\not\sim u_2$.

\begin{figure}[ht]
\centering
\includegraphics[scale=0.9]{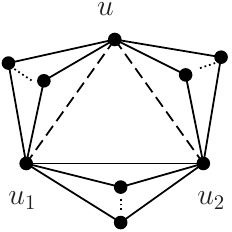}
\caption{The graph with $t_G(u_2)=2$.}\label{k4minor}
\end{figure}

Let $s_1\in T_G(u_1)$ and $s_1\neq u,u_2$.
Let $Q_u$ be the set of neighbors of $u$ with degree two.
Since $d_G(u)\geq 3$,
there exists at least one neighbor of $u$ with degree two.
Let $H$ be the graph obtained from $G$ by deleting $Q_u$ and adding edges to ensure $u\sim u_1$ and $u\sim u_2$.

\begin{claim}\label{reop}
There exists a rainbow vertex-disconnection coloring $c_H$ of $H$ with at most $\Delta(H)$ colors
such that at least two vertices from $\{u,u_1,u_2\}$ have different colors.
\end{claim}
\begin{proof}
Assume, to the contrary,
we have $c(u)=c(u_1)=c(u_2)$ for any rainbow vertex-disconnection coloring $c$ of $H$ with at most $\Delta(H)$ colors.
Then $u_1\not\sim u_2$ and $m_H(u_1,u_2)=1$ by Lemma \ref{rvddifcolor}.
Let $c_H$ be a rainbow vertex-disconnection coloring of $H$ with at most $\Delta(H)$ colors.
We construct a new coloring $c_{H'}$ of $H$ as follows.
Let $c_{H'}(v)=c_H(v)$ for $v\in V(H)\setminus\{u\}$.
Color $u$ such that $c_{H'}(u)\neq c_H(u_1),c_H(s_1)$.
We denote the new colored graph by $H'$.
Now claim $c_{H'}$ is a rainbow vertex-disconnection coloring of $H'$.

Let $x$ and $y$ be two vertices of graph $H'$.
We have $D_{H'}(u,u_1)=\{u,u_2\}$,$D_{H'}(u,u_2)=\{u,u_1\}$
and $D_{H'}(u_1,u_2)=\{u,s_1\}$.
When $x=u_1$ and $y\in V(H')\setminus\{u,u_1,u_2\}$,
if $u\in D_H(x,y)$,
then $D_{H'}(x,y)=D_H(x,y)\cup\{u_2\}\setminus\{u\}$;
otherwise,
$D_{H'}(x,y)=D_{H}(x,y)$.
By symmetry,
consider $x,y\in V(H')\setminus\{u_1\}$ and $\{x,y\}\neq \{u,u_2\}$.
If $u\in D_H(x,y)$,
then $D_{H'}(x,y)=D_H(x,y)\cup\{u_1\}\setminus\{u\}$;
otherwise,
$D_{H'}(x,y)=D_{H}(x,y)$.
So $c_{H'}$ is a rainbow vertex-disconnection coloring of $H'$ with at most $\Delta(H')$ colors
and $c_{H'}(u)\neq c_{H'}(u_1)$.
It is a contradiction.

\end{proof}

\begin{claim}\label{rvdextend}
Let $u_1\not\sim u_2$ and $c_H$ be a rainbow vertex-disconnection coloring of $H$ from Claim \ref{reop}.
If there exists a vertex $u_i$ from $\{u_1,u_2\}$ satisfying  $m_G(u,u_i)\leq 1$ and $c_H(u)=c_H(u_i)$,
then $c_H$ can be extended to a rainbow vertex-disconnection coloring $c_G$ of $G$ with at most $\Delta(G)$ colors.
\end{claim}
\begin{proof}
Assume that $c_H(u_1)=c_H(u)\neq c_H(u_2)$.
If $m_G(u,u_1)=1$,
then let $M_G(u,u_1)=\{t\}$.
We extend $c_H$ to a coloring $c_G$ of $G$ as follows.
Let $c_G(v)=c_H(v)$ for $v\in V(H)$.
Color $t$ different from $u_1,u_2$.
Color $M_G(u,u_2)$ different from $N_G(u)\setminus M_G(u,u_2)$ and rainbow.
Then $c_G$ has at most $\Delta(G)$ colors.
Now we claim that $c_{G}$ is a rainbow vertex-disconnection coloring of $G$.
Let $x$ and $y$ be two vertices of graph $G$.
If $x\in \{u\}\cup M_G(u,u_2)$,
then $D_G(x,y)=N_G(x)$.
By symmetry,
consider $x,y\not\in \{u\}\cup M_G(u,u_2)$.
Assume that $x=t$.
We have $D_G(t,u_1)=\{u,t\}$ and $D_G(t,u_2)=D_{H}(u_1,u_2)$.
If $y\not\in \{u,t,u_1,u_2\}\cup M_G(u,u_2)$,
$D_G(t,y)=\{u_1,u_2\}$.
By symmetry, assume that $x,y\not\in \{u,t\}\cup M_G(u,u_2)$.
We have $D_G(x,y)=D_H(x,y)$.
So $c_G$ is a rainbow vertex-disconnection coloring of $G$ with at most $\Delta(G)$ colors.

If $m_G(u,u_1)=0$,
it is similar to the above coloring $c_G$ without $t$.
The case $c_H(u_2)=c_H(u)\neq c_H(u_1)$ will not be repeated here.
\end{proof}

By Claim \ref{reop}, there are four cases under $c_H$ in $H$.

\textbf{Case $1.$ } $\{u,u_1,u_2\}$ is rainbow.

We extend $c_H$ to a vertex-coloring $c_G$ of $G$ as follows.
Let $c_G(v)=c_H(v)$ for $v\in V(H)$.
Color $Q_u$ different from $N_G(u)\setminus Q_u$ and rainbow.
Now we claim that $c_{G}$ is a rainbow vertex-disconnection coloring of $G$.
Let $x$ and $y$ be two vertices of graph $G$.
If $x\in  \{u\}\cup Q_u$,
then $D_G(x,y)=N_G(x)$.
By symmetry, consider $x,y\in V(G)\setminus \{\{u\}\cup Q_u\}$.
Then $D_G(x,y)=D_{H}(x,y)$.
So $c_G$ is a rainbow vertex-disconnection coloring of $G$ with at most $\Delta(G)$ colors.

\textbf{Case $2.$} $c_H(u_1)=c_H(u_2)\neq c_H(u)$.

We extend $c_H$ to a vertex-coloring $c_G$ of $G$ as follows.
Let $c_G(v)=c_H(v)$ for $v\in V(H)$.
If $u\sim u_1$,
color $M_G(u,u_1)$ different from $u,u_2$ and rainbow;
otherwise,
color $M_G(u,u_1)$ different from $u_2$ and rainbow.
If $u\sim u_2$,
color $M_G(u,u_2)$ different from $u,u_1$ and rainbow;
otherwise,
color $M_G(u,u_2)$ different from $u_1$ and rainbow.
Then $c_G$ uses at most $\Delta(G)$ colors.

Now we claim that $c_{G}$ is a rainbow vertex-disconnection coloring of $G$.
Let $x$ and $y$ be two vertices of graph $G$.
If $x\in Q_u$,
then $D_G(x,y)=N_G(x)$.
By symmetry, consider $x,y\in V(G)\setminus Q_u$.
Consider $x=u$.
We have $D_G(u,u_1)=M_G(u,u_1)\cup\{u_2\}$ or $M_G(u,u_1)\cup\{u,u_2\}$
and $D_G(u,u_2)=M_G(u,u_2)\cup\{u_1\}$ or $M_G(u,u_2)\cup\{u,u_1\}$.
If $y\not\in  Q_u\cup\{u,u_1,u_2\}$,
then $D_G(u,y)=D_{H}(u,y)$.
By symmetry, consider $x,y\in V(G)\setminus \{\{u\}\cup Q_u\}$.
Then $D_G(x,y)=D_{H}(x,y)$.
So $c_G$ is a rainbow vertex-disconnection coloring of $G$ with at most $\Delta(G)$ colors.

\begin{claim}\label{rvdmg}
 Assume that $c_H(u)=c_H(u_1)\neq c_H(u_2)$ or $c_H(u)=c_H(u_2)\neq c_H(u_1)$.
 If $m_H(u_1,u_2)\geq 2$,
 then there exists a rainbow vertex-disconnection coloring of $G$ with at most $\Delta(G)$ colors.
\end{claim}
\begin{proof}
Assume that $c_H(u)=c_H(u_1)\neq c_H(u_2)$.
If there exists a vertex $u_0\in M_H(u_1,u_2)$ with the color different from $c_H(u_1)$ and $c_H(u_2)$,
by symmetry,
we regard $u_0$ as $u$.
It belongs to Case 1.
So $m_H(u_1,u_2)=2$ and $M_H(u_1,u_2)$ has the same colors with $u_1$ and $u_2$.
Then $u_1\not\sim u_2$.
By Claim \ref{rvdextend},
$m_G(u,u_1)\geq 2$.
So $\Delta(G)\geq 4$,
otherwise $u_2$ is a cut vertex,
which is a contradiction.
Let $M_H(u_1,u_2)=\{u,u'\}$.
We give a new vertex-coloring $c_{H'}$ of $H$ by recoloring $u$.
If $c_H(s_1)=c_H(u_1)$ or $c_H(u_2)$,
we regard the vertex from $M_H(u_1,u_2)$ with the same color of $s_1$ as vertex $u$ and color $u$ different from $u_1,u_2$.
Otherwise,
color $u$ different from $u_1,u_2,s_1$.
We denote the new colored graph by $H'$.
Then $c_{H'}$ uses at most $\Delta(H')$ colors.
Now we claim that $c_{H'}$ is a rainbow vertex-disconnection coloring of $H'$.
Let $x$ and $y$ be two vertices of graph $H'$.
If $x\in \{u,u'\}$, then $D_{H'}(x,y)=N_{H'}(x)$.
We have $D_{H'}(u_1,u_2)=\{u,u',s_1\}$.
For $x=u_1$ and $y\not\in\{u,u',u_1,u_2\}$,
if $\{u,u'\}\subseteq D_H(x,y)$,
then $D_{H'}(x,y)=D_H(x,y)\cup\{u_2\}\setminus\{u,u'\}$;
otherwise,
$D_{H'}(x,y)=D_H(x,y)\setminus\{u,u'\}$.
When $\{x,y\}$ is other pairs of vertices,
if $\{u,u'\}\subseteq D_H(x,y)$,
then $D_{H'}(x,y)=D_H(x,y)\cup\{u_1\}\setminus\{u,u'\}$;
otherwise,
$D_{H'}(x,y)=D_H(x,y)\setminus\{u,u'\}$.
So $c_{H'}$ is a rainbow vertex-disconnection coloring of $H'$ with at most $\Delta(H')$ colors, where $\{u,u_1,u_2\}$ is rainbow.
It belongs to Case 1.
The case $c_H(u)=c_H(u_2)\neq c_H(u_1)$ will not be repeated here.
\end{proof}

\textbf{Case $3.$} $c_H(u)=c_H(u_1)\neq c_H(u_2)$.

By Claim \ref{rvdmg},
we have $m_H(u_1,u_2)=1$.
Consider $\Delta(G)\geq 4$.
Assume that there exists $D_H(u_1,s_1)$ such that $u\not\in D_H(u_1,s_1)$.
We give a new vertex-coloring $c_{H'}$ of $H$ by recoloring $u$ different from $s_1,u_1,u_2$. We denote the new colored graph by $H'$.
Then $c_{H'}$ uses at most $\Delta(H')$ colors.
Now we claim that $c_{H'}$ is a rainbow vertex-disconnection coloring of $H'$.
Let $x$ and $y$ be two vertices of graph $H'$.
If $x=u$, then $D_{H'}(x,y)=N_{H'}(u)$.
We have $D_{H'}(u_1,s_1)=D_H(u_1,s_1)$ and $D_{H'}(u_1,u_2)=\{u,s_1,u_1\}$ or $\{u,s_1,u_2\}$.
If $x=u_1$ and $y$ is the neighbor of $u_1$ with degree two,
then $D_{H'}(u_1,y)=\{s_1,u_1\}$ or $\{s_1,y\}$ or $\{u_2,u_1\}$.
If $x=u_1$ and $y\not\in N_{H'}[u_1]\cup\{s_1,u_2\}$,
then $D_{H'}(u_1,y)=\{u,s_1\}$ for $u_1\not\sim u_2$ and $D_{H'}(u_1,y)=D_{H}(u_1,y)\setminus\{u\}$ for $u_1\sim u_2$.
By symmetry,
consider $x,y\not\in \{u,u_1\}$.
If $u\in D_H(x,y)$,
then $D_{H'}(x,y)=D_H(x,y)\cup\{u_1\}\setminus\{u\}$;
otherwise,
$D_{H'}(x,y)=D_H(x,y)$.
So $c_{H'}$ is a rainbow vertex-disconnection coloring of $H'$ with at most $\Delta(H')$ colors, where $\{u,u_1,u_2\}$ is rainbow.
It belongs to Case 1.

Assume that $u$ is contained in any $u_1$-$s_1$ rainbow vertex-cut under $c_H$ in $H$.
Then $M_{H}(u_1,s_1)$ has no color like $u$ under $c_H$ and $u_1\not\sim u_2$.
$N_H(u_1)$ is rainbow.
The color of $u_2$ appears in the colors of $M_H(u_1,s_1)$.
Otherwise,
let $D_H(u_1,s_1)=M_H(u_1,s_1)\cup\{u_1,u_2\}$ containing no $u$, which is a contradiction.
If $u_1\sim s_1$,
then the color of $s_1$ is different from the colors of $u_1$ and $u_2$ under $c_H$.
Based on $c_H$,
we recolor $u$ different from the colors of $N_H(u_1)$.
We denote the new colored graph by $H'$ and the vertex-coloring by $c_{H'}$.
Then $\{u,u_1,u_2\}$ is rainbow in $H'$.
By Claim \ref{rvdextend},
we consider $m_G(u,u_1)\geq 2$.
We have $\Delta(G)\geq d_G(u_1)\geq d_H(u_1)+1$.
So we use at most $\Delta(G)$ colors under $c_{H'}$.
Now we claim that $c_{H'}$ is a rainbow vertex-disconnection coloring of $H'$.
Let $x$ and $y$ be two vertices of graph $H'$.
If $x=u$ or $u_1$, then $D_{H'}(x,y)=N_{H'}(x)$.
By symmetry, consider $x,y\in V(H')\setminus\{u,u_1\}$.
If $u\in D_H(x,y)$,
then $D_{H'}(x,y)=D_H(x,y)\cup\{u_1\}\setminus\{u\}$;
otherwise,
$D_{H'}(x,y)=D_H(x,y)$.
So $c_{H'}$ is a rainbow vertex-disconnection coloring of $H'$.
It belongs to Case 1.

Consider $\Delta(G)=3$.
Then $d_G(u)=3$.
If $u_1\sim u_2$,
we have $N_G[u]\cup\{u_1,u_2\}$ is a block and $G$ is not 2-connected.
It is a contradiction.

Assume that $u_1\not\sim u_2$.
By Claim \ref{rvdextend},
consider $m_G(u,u_1)=2$.
We have $d_H(u_1)=2$.
Based on $c_H$,
we recolor $u$ different from the colors of $N_H(u_1)$.
We denote the new colored graph by $H'$ and the vertex-coloring by $c_{H'}$.
Then we use at most three colors in $c_{H'}$ and $N_{H'}(u_1)$ is rainbow.
Now we claim that $c_{H'}$ is a rainbow vertex-disconnection coloring of $H'$.
Let $x$ and $y$ be two vertices of graph $H'$.
If $x=u$ or $u_1$, then $D_{H'}(x,y)=N_{H'}(x)$.
By symmetry, consider $x,y\in V(H')\setminus\{u,u_1\}$.
If $u\in D_H(x,y)$,
then $D_{H'}(x,y)=D_H(x,y)\cup\{u_1\}\setminus\{u\}$;
otherwise,
$D_{H'}(x,y)=D_H(x,y)$.
So $c_{H'}$ is a rainbow vertex-disconnection coloring of $H'$.
It belongs to Case 1 or has $c_{H'}(u)=c_{H'}(u_2)$.
If $c_{H'}(u)=c_{H'}(u_2)$,
since $m_G(u,u_2)\leq 1$,
there exists a rainbow vertex-disconnection coloring $c_G$ of $G$ with at most $\Delta(G)$ colors by Claim \ref{rvdextend}.

\textbf{Case $4.$} $c_H(u)=c_H(u_2)\neq c_H(u_1)$.

By Claim \ref{rvdmg},
we have $m_H(u_1,u_2)=1$.
Assume that $u_1\sim u_2$.
Based on $c_H$,
we give a new vertex-coloring $c_{H'}$ of $H$ by recoloring $u$.
If $c_H(s_1)\neq c_H(u_2)$,
recolor $u$ different from $s_1,u_2$;
otherwise,
recolor $u$ different from $s_1,u_1$.
We denote the new colored graph by $H'$.
Then $c_{H'}(u)\neq c_{H'}(u_2)$.

Now we claim that $c_{H'}$ is a rainbow vertex-disconnection coloring of $H'$.
Let $x$ and $y$ be two vertices of graph $H'$.
If $x=u$, then $D_{H'}(x,y)=N_{H'}(u)$.
By symmetry, consider $x,y\in V(H')\setminus\{u\}$.
We have $D_{H'}(u_1,u_2)=\{u,s_1,u_1\}$ or $\{u,s_1,u_2\}$.
When $\{x,y\}$ is other pairs of vertices,
if $u\in D_H(x,y)$,
then $D_{H'}(x,y)=D_H(x,y)\setminus\{u\}$;
otherwise,
$D_{H'}(x,y)=D_H(x,y)$.
So $c_{H'}$ is a rainbow vertex-disconnection coloring of $H'$.
It belongs to Case 1 or 3.

Assume that $u_1\not\sim u_2$.
By Claim \ref{rvdextend},
we consider $m_G(u,u_2)\geq 2$.
If $m_G(u,u_1)\geq 1$,
we can restrict an rvd-coloring of $G-M_G(u,u_1)+\{uu_1\}$ to $H$.
Then it belongs to Case 1 or 2 or 3.
So $m_G(u,u_1)=0$.
Then $u\sim u_1$ in $G$.
We have $m_G(u_1,s_1)\geq 2$.
Otherwise,
regarding $u_1$ as $u$,
by Claim \ref{rvdextend} and Case 1 and Case 2,
there exists a rainbow vertex-disconnection coloring $c_G$ of $G$ with at most $\Delta(G)$ colors.
Let $G'$ be the graph obtained from $G$ by deleting $M_G(u,u_2)$, $M_G(u_1,s_1)$, $\{u,u_1\}$ and adding two new vertices $q_1,q_2$,
which are the common neighbors of $s_1$ and $u_2$.
By induction hypothesis,
there exists a rainbow vertex-disconnection coloring $c_{G'}$ of $G'$ using at most $\Delta(G')$ colors.
Obviously, $c_{G'}(s_1)\neq c_{G'}(u_2)$.

Assume that $s_1\sim u_2$.
Then $\{s_1,u_2,q_1\}$ or $\{s_1,u_2,q_2\}$ is rainbow under $c_{G'}$.
Without loss of generality,
assume that $\{s_1,u_2,q_1\}$ is rainbow.
Now we extend $c_{G'}$ to a coloring $c_G$ of $G$ as follows.
Let $c_G(v)=c_{G'}(v)$ for $v\in V(G')\setminus\{q_1,q_2\}$.
Let $c_G(u_1)=c_{G'}(q_1)$.
Color $u$ different from $s_1$ and $u_2$.
Color $M_G(u,u_2)$ different from $N_G(u)\setminus M_G(u,u_2)$ and rainbow.
Color $M_G(u_1,s_1)$ different from $N_G(u_1)\setminus M_G(u_1,s_1)$ and rainbow.
Now we claim that $c_{G}$ is a rainbow vertex-disconnection coloring of $G$.
Let $x$ and $y$ be two vertices of graph $G$.
If $x\in \{u,u_1\}\cup M_G(u,u_2)\cup M_G(u_1,s_1)$, then $D_{G}(x,y)=N_{G}(x)$.
By symmetry, consider $x,y\not\in \{u,u_1\}\cup M_G(u,u_2)\cup M_G(u_1,s_1)$.
We have $D_G(s_1,u_2)=D_{G'}(s_1,u_2)\cup\{u_1\}\setminus\{q_1,q_2\}$.
When $\{x,y\}$ is other pairs of vertices,
$D_G(x,y)=D_{G'}(x,y)\setminus\{q_1,q_2\}$.
So $c_{G}$ is a rainbow vertex-disconnection coloring of $G$ using at most $\Delta(G)$ colors.

Assume that $s_1\not\sim u_2$.
Then $c_{G'}(q_1)\neq c_{G'}(u_2)$ or $c_{G'}(q_2)\neq c_{G'}(u_2)$.
Without loss of generality,
assume that $c_{G'}(q_1)\neq c_{G'}(u_2)$.
Now we extend $c_{G'}$ to a vertex-coloring $c_G$ of $G$ as follows.
Let $c_G(v)=c_{G'}(v)$ for $v\in V(G')\setminus\{q_1,q_2\}$.
Let $c_G(u)=c_{G'}(q_1)$.
Color $u_1$ different from $s_1$ and $u_2$.
Color $M_G(u,u_2)$ different from $N_G(u)\setminus M_G(u,u_2)$ and rainbow.
If $u_1\sim s_1$,
color $M_G(u_1,s_1)$ different from $\{s_1,u_2\}$ and rainbow;
otherwise,
color $M_G(u_1,s_1)$ different from $u_2$ and rainbow.
Then $c_{G}$ uses at most $\Delta(G)$ colors.
Now we claim that $c_{G}$ is a rainbow vertex-disconnection coloring of $G$.
Let $x$ and $y$ be two vertices of graph $G$.
Let $T=\{u\}\cup M_G(u,u_2)\cup M_G(u_1,s_1)$.
If $x\in T$, then $D_{G}(x,y)=N_{G}(x)$.
By symmetry, consider $x,y\not\in T$.
We have $D_G(u_1,u_2)=D_{G'}(s_1,u_2)\cup\{u\}\setminus\{q_1,q_2\}$
and $D_G(u_1,s_1)=N_G(u_1)\cup\{u_2\}\setminus\{u\}$.
If $x=u_1$ and $y\not\in T\cup\{u_1,u_2,s_1\}$,
we have $D_G(u_1,y)=\{s_1,u_2\}$.
By symmetry, consider $x,y\not\in T\cup\{u_1\}$.
If $q_1\in D_{G'}(x,y)$,
then $D_G(x,y)=D_{G'}(x,y)\cup\{u\}\setminus\{q_1,q_2\}$;
otherwise,
$D_G(x,y)=D_{G'}(x,y)\setminus\{q_2\}$.
So $c_{G}$ is a rainbow vertex-disconnection coloring of $G$ using at most $\Delta(G)$ colors.

Thus, we have $rvd(G)\leq \Delta(G)$ for any $K_4$-minor free graph $G$.
The bound is sharp for $G=K_{2,n-2}$, which is a $K_4$-minor free graph with $\Delta(G)=n-2$ and $rvd(G)=n-2$.

\end{proof}

\section{Hardness results}
Decide Rainbow Vertex-disconnection Coloring Problem (RVD-Problem)

Instance: A graph $G=(V,E)$ and a positive integer $k$.

Question: Does $G$ have a rainbow vertex-disconnection coloring using $k$ colors?

A graph $G$ is $k$-colorable if there exists a vertex-coloring $c:V(G)\rightarrow [k]$ such that no two adjacent vertices have the same color.
The coloring $c$ is called \emph{proper}.
The \emph{chromatic number} $\chi(G)$ of $G$ is the minimum $k$ such that $G$ is $k$-colorable.

\begin{thm}\label{hardbi}
RVD-Problem is NP-complete for bipartite graphs.
\end{thm}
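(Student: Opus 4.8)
\noindent{\em Proof idea.}
There are two things to do: show RVD-Problem is in NP (on all inputs, hence on bipartite ones), and show it is NP-hard on bipartite inputs. Membership is not completely obvious, since even recognizing whether a given vertex-colored graph is rainbow vertex-disconnected is NP-complete, so a coloring alone is not a polynomial-time-checkable certificate. The remedy is to include the separating sets in the certificate: a witness for a yes-instance $(G,k)$ is a map $c\colon V(G)\to[k]$ together with, for every pair $\{x,y\}\subseteq V(G)$, a set $S_{xy}\subseteq V(G)$; one verifies in polynomial time that $x$ and $y$ lie in different components of $G-S_{xy}$ when $x\not\sim y$ (resp.\ of $(G-xy)-S_{xy}$ when $x\sim y$) and that $S_{xy}$ (resp.\ $S_{xy}+x$ or $S_{xy}+y$) is rainbow under $c$. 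Such a certificate exists precisely when $G$ admits an $rvd$-coloring with $k$ colors.

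For hardness, the plan is to reduce from a coloring problem -- deciding $\chi(H)\le k$, which is NP-complete already for $k=3$ (and, with a view to the inapproximability results that follow, from the injective chromatic number problem for bipartite graphs studied by Jin et al.\ \cite{JXZ}). From the instance one builds in polynomial time a bipartite graph $G$: keep a copy $A$ of the vertex set of the instance as one side of the bipartition, and around each vertex and each edge plant small bipartite gadgets assembled from pairs of common neighbors (copies of $C_4$), the other side of $G$ being the gadget vertices. By Lemma \ref{rvddifcolor} the two common neighbors planted on an edge $uv$ force $c(u)\ne c(v)$ in every $rvd$-coloring, so the restriction to $A$ of any $rvd$-coloring is a proper coloring of $H$, giving the lower bound $rvd(G)\ge\chi(H)$ (up to the controlled function of $k$ built into the gadget). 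For the matching upper bound one colors $A$ by an optimal coloring $\varphi$ of the instance and extends to the gadget vertices using the same palette -- a color outside $\{\varphi(u),\varphi(v)\}$ is available for the common neighbors of $uv$ once $\chi(H)\ge 3$, which one may assume after a harmless normalization -- and then checks that the result is a rainbow vertex-disconnection coloring by producing, for each pair of vertices, a rainbow vertex-cut: degree-two gadget vertices give rainbow cuts of size two, and the remaining pairs are handled by isolating one endpoint through its (forced rainbow) neighborhood.

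The hard part lies entirely in this last point: the gadget must be designed so that, simultaneously, (i) an $rvd$-coloring collapses to a proper coloring of the instance on $A$; (ii) a proper optimal coloring genuinely extends to an $rvd$-coloring of all of $G$ -- this is delicate, because in a careless construction two vertices of $A$ can have minimum vertex-cut as large as $2\Delta(H)$, which cannot be made rainbow with only $O(\chi(H))$ colors, so the cut structure has to be controlled, e.g.\ by forcing every neighborhood to be rainbow via Lemma \ref{rvddifcolor} (the phenomenon already behind $rvd(K_{2,n-2})=n-2$) while keeping the chromatic requirement from blowing up; and (iii) $rvd(G)$ is pinned to the intended value rather than to an unrelated quantity such as $\Delta(G)$ or $\chi_i(G)$. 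Once (i)--(iii) are made to coexist the reduction is complete; tracking sizes in the same construction applied to the bipartite injective-coloring instances gives the inapproximability bound, and replacing the independent set $A$ by a clique carries the whole argument over to split graphs.
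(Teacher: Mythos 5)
Your NP-membership argument is fine, and in fact more careful than the paper's: since Chen et al.\ \cite{CLLW} showed that recognizing whether a given vertex-colored (even bipartite) graph is rainbow vertex-disconnected is itself NP-complete, a coloring alone is not an obviously checkable certificate, and packaging the cuts $S_{xy}$ into the witness is the right repair. The hardness half, however, is only a plan: you correctly identify the three constraints (i)--(iii) that the gadget must satisfy, and you correctly diagnose the obstruction in (ii) --- with one pair of common neighbors per edge and a palette of size $O(\chi(H))$, a vertex $u$ of the instance has $2d_H(u)$ gadget neighbors whose colors cannot all be distinct --- but you do not actually produce a gadget that resolves it. ``The cut structure has to be controlled, e.g.\ by forcing every neighborhood to be rainbow \ldots{} while keeping the chromatic requirement from blowing up'' is precisely the problem, not its solution, so the reduction is not complete as written.

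The missing idea in the paper is to stop insisting that the target value be $\approx\chi(H)$ and instead set the threshold at $k+2|E|$. Concretely: each edge $uv$ is replaced by two common neighbors $s_{uv},s'_{uv}$ (your $C_4$ gadget), collected into $V_s$ with $|V_s|=2|E|$, and a second layer $V_t$ of $2|E|$ vertices is completely joined to $V_s$. In the forward direction one colors $V_s$ rainbow with $2|E|$ \emph{fresh} colors (copying them onto $V_t$), which makes every open neighborhood of $\widetilde{G}$ rainbow and hence gives a rainbow vertex-disconnection coloring with $k+2|E|$ colors --- this is what dissolves your objection (ii), since $N(u)\subseteq V_s$ is rainbow for free. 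In the backward direction the complete join forces, via Lemma \ref{rvddifcolor}, that $V_t$ is rainbow (any two of its vertices share two neighbors in $V_s$) and that the colors on $V$ are disjoint from those on $V_t$ (a vertex of $V$ and a vertex of $V_t$ share the two neighbors $s_{xz},s'_{xz}$), so only $k$ colors remain for $V$ and the restriction is a proper $k$-coloring. Without some such mechanism pinning the number of colors available on the instance vertices, your lower bound ``$rvd(G)\ge\chi(H)$ up to a controlled function of $k$'' is not established; this budget-shifting device is the actual content of the reduction, and it is what your proposal is missing. (Your side remarks --- reducing from injective coloring, and handing both common neighbors of $uv$ a single color outside $\{\varphi(u),\varphi(v)\}$, which Lemma \ref{rvddifcolor} forbids since they share the two neighbors $u$ and $v$ --- would also need to be dropped or repaired.)
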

\begin{proof}
Given a fixed $k$-vertex-coloring $c_0$ of a bipartite graph, it is polynomial time to vertify whether it is a rainbow vertex-disconnection coloring.
So RVD-Problem is in NP for bipartite graphs.

We give a polynomial reduction from the proper coloring problem of $G=(V,E)$,
which is NP-complete for general graphs. We will construct a graph $\widetilde{G}$ from $G$ such that $\chi(G)\leq k$ if and only if $rvd(\widetilde{G})\leq k+2|E|$.

We construct $\widetilde{G}$ as follows.
For each edge $uv$ in $G$, add four vertices $s_{uv},s'_{uv},t_{uv},t'_{uv}$ and replace $uv$ with edges $us_{uv},us'_{uv},vs_{uv},vs'_{uv}$.
Let $V_s=\{s_{uv},s'_{uv}:uv\in E\}$
and $V_t=\{t_{uv},t'_{uv}:uv\in E\}$.
Add $E(V_s,V_t)$.
Then we obtain the graph $\widetilde{G}$ with $V(\widetilde{G})=V\cup V_s\cup V_t$
and $E(\widetilde{G})=\{us_{uv},us'_{uv},vs_{uv},vs'_{uv}:uv\in E\}\cup E(V_s,V_t)$. Obviously, $\widetilde{G}$ is bipartite and we can construct it from $G$ in polynomial time.

If $\chi(G)\leq k$, then we give a proper coloring $c$ of $G$:
$V\rightarrow[k]$.
Let $\widetilde{c}$: $V(\widetilde{G})\rightarrow [k+2|E|]$ be a vertex-coloring of $\widetilde{G}$ as follows. For $v\in V$, $\widetilde{c}(v)=c(v)$. Let $V_s$ be rainbow using colors $\{k+1,k+2,\cdots,k+2|E|\}$
and $\widetilde{c}(t_{uv})=\widetilde{c}(s_{uv})$ for each $uv\in E$.
Then for $v\in V\cup V_t$, $N_{\widetilde{G}}(v)$ is rainbow.
For $s_{uv},s'_{uv}\in V_s$, $s_{uv}$ and $s'_{uv}$ have two neighbors $u$ and $v$ from $V$ in $\widetilde{G}$.
Since $c(u)\neq c(v)$, we have $\widetilde{c}(u)\neq \widetilde{c}(v)$.
So $N_{\widetilde{G}}(s_{uv})$, $N_{\widetilde{G}}(s'_{uv})$ are rainbow.
Thus, $\widetilde{c}$ is a rainbow vertex-disconnection coloring of $\widetilde{G}$ and $rvd(\widetilde{G})\leq k+2|E|$.

Conversely, assume that $rvd(\widetilde{G})\leq k+2|E|$.
Let $\widetilde{c}$: $V(\widetilde{G})\rightarrow [k+2|E|]$ be a rainbow vertex-disconnection coloring of $\widetilde{G}$.
Since any two vertices in $V_t$ has at least two common neighbors in $V_s$,
by Lemma \ref{rvddifcolor}, $V_t$ is rainbow.
For any vertex $x\in V$ and any vertex $y\in V_t$, assuming that $xz\in E$,
vertices $s_{xz}$ and $s'_{xz}$ in $V_s$ are two common neighbors of vertices $x$ and $y$. So the colors of $V$ in $\widetilde{G}$ are disjoint with the colors of $V_t$ in $\widetilde{G}$.
Let $\widetilde{c}_V$ be the coloring of $G$ by restricting $\widetilde{c}$ to $V$. Then $\widetilde{c}_V$ has at most $k$ colors.
For any two adjacent vertices $u$ and $v$ in $G$,
since $u$ and $v$ have two common neighbors $s_{uv},s'_{uv}$ in $\widetilde{G}$,
we have $\widetilde{c}_V(u)\neq \widetilde{c}_V(v)$ by Lemma \ref{rvddifcolor}.
So $\widetilde{c}_V$ is a proper coloring of $G$ and $\chi(G)\leq k$.
\end{proof}

A subset $S$ of $V(G)$ is called an \emph{independent set} of $G$ if no two vertices of $S$ are adjacent in $G$. An independent set is \emph{maximum} if $G$ has no independent set $S'$ with $|S'|>|S|$. The number of vertices in a maximum independent set of $G$ is called the \emph{independence number} of $G$ and is denoted by $\alpha(G)$.

A \emph{$k$-fold coloring} of a graph $G$ is an assignment of sets of size $k$ to vertices of a graph such that adjacent vertices receive disjoint sets.

The \emph{$k$-fold chromatic number}, denoted by $\chi_k(G)$, is the minimum number of colors to obtain a $k$-fold coloring of $G$.
The \emph{fractional chromatic number} of $G$ is defined as $\chi_{f}(G)=\inf_k\frac{\chi_k(G)}{k}$.
It has been proved that $\chi_{f}(G)\geq \frac{|V(G)|}{\alpha(G)}$ \cite{LPU}.
\begin{thm}
If $ZPP\neq NP$, then,
for every $\epsilon>0$,
it is not possible to efficiently approximate $rvd(G)$ within a factor of $n^{\frac{1}{3}-\epsilon}$,
for any bipartite graph $G$.
\end{thm}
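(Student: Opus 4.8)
The plan is to bootstrap from the known inapproximability of the injective chromatic number rather than to build everything from scratch. Recall from the introduction that $rvd(G)\le\chi_i(G)$ for every connected graph $G$, and that by Jin et al.~\cite{JXZ}, unless $ZPP=NP$, for every $\epsilon>0$ there is no polynomial-time algorithm approximating $\chi_i$ of a bipartite graph within a factor $n^{\frac{1}{3}-\epsilon}$ (the hard instances may be taken connected, since $\chi_i$ of a graph is the maximum of $\chi_i$ over its components). Hence it suffices to exhibit a polynomial-time transformation $G\mapsto G^{*}$ of connected bipartite graphs with $G^{*}$ connected bipartite, $|V(G^{*})|=O(|V(G)|)$ and $rvd(G^{*})=\Theta(\chi_i(G))$: then an $n^{\frac{1}{3}-\epsilon}$-approximation for $rvd$ on bipartite graphs, applied to $G^{*}$, would return an $n^{\frac{1}{3}-\epsilon/2}$-approximation of $\chi_i(G)$ once $|V(G)|$ exceeds a constant depending on $\epsilon$ (the finitely many smaller instances being solved by brute force), contradicting~\cite{JXZ} under the hypothesis $ZPP\ne NP$.

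The key observation that makes this possible is that Lemma~\ref{rvddifcolor} provides a one-sided comparison between $rvd$ and $\chi_i$ which becomes an equality under a mild structural condition. Let $G^{[1]}$ be the graph on $V(G)$ in which $x,y$ are adjacent iff they have a common neighbour in $G$, and let $G^{[2]}$ be the graph in which $x,y$ are adjacent iff they have at least two common neighbours. By definition $\chi_i(G)=\chi(G^{[1]})$, whereas Lemma~\ref{rvddifcolor} says that every $rvd$-coloring is a proper colouring of $G^{[2]}$, so $rvd(G)\ge\chi(G^{[2]})$. Consequently, if $G$ has the property $(\star)$ that any two vertices with a common neighbour already have at least two common neighbours, then $G^{[1]}=G^{[2]}$, and together with $rvd(G)\le\chi_i(G)$ this forces $\chi_i(G)=\chi(G^{[2]})\le rvd(G)\le\chi_i(G)$, i.e. $rvd(G)=\chi_i(G)$.

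To enforce $(\star)$ while barely disturbing $\chi_i$ or the order, I would let $G^{*}$ be obtained from $G$ by replacing each vertex $v$ with two non-adjacent twins $v_1,v_2$ whose common neighbourhood is $\{u_1,u_2:u\in N_G(v)\}$. Then $G^{*}$ is bipartite, $|V(G^{*})|=2|V(G)|$, and $G^{*}$ is connected whenever $G$ is; moreover, whenever two vertices of $G^{*}$ share a neighbour $c$ they also share its twin, so $(\star)$ holds and hence $rvd(G^{*})=\chi_i(G^{*})$ by the previous paragraph. It remains to check $\chi_i(G)\le\chi_i(G^{*})\le 2\chi_i(G)$: for $G$ connected with at least two vertices, the common-neighbour graph of $G^{*}$ on each side is exactly the lexicographic product of the corresponding common-neighbour graph of $G$ with $K_2$, whose chromatic number is the $2$-fold chromatic number $\chi_2$ and therefore lies between $\chi$ and $2\chi$. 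Combining, $rvd(G^{*})=\Theta(\chi_i(G))$ and $|V(G^{*})|=\Theta(|V(G)|)$, which is precisely the transformation required, and the theorem follows from the gap argument of the first paragraph.

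The only genuine work lies in the third paragraph: confirming $rvd(G^{*})=\chi_i(G^{*})$ — here $rvd\ge\chi(G^{[2]})$ is immediate from Lemma~\ref{rvddifcolor}, but one must be sure the twinning has not secretly inflated $\chi_i$, and must invoke the a priori inequality $rvd\le\chi_i$ — and pinning down $\chi_i(G^{*})=\Theta(\chi_i(G))$; both are short but need to be verified with care. I remark that the apparatus recalled just before the statement, namely $\chi_f(G)\ge |V(G)|/\alpha(G)$ together with the $k$-fold chromatic number $\chi_k$ and $\chi_f(G)=\inf_k\chi_k(G)/k$, points to a self-contained alternative that avoids quoting~\cite{JXZ} and that also adapts to split graphs: reduce directly from the independence number (which, unless $ZPP=NP$, admits no $n^{1-\epsilon}$-approximation), building a bipartite graph with subdivision-type gadgets so that Lemma~\ref{rvddifcolor} forces a large rainbow set when $\alpha$ is small, together with enough low-degree padding — a polynomial blow-up calibrated, through $\chi_f(G)=\inf_k\chi_k(G)/k$, to produce the exponent $\frac{1}{3}$ — so that when $\alpha$ is large every pair $x,y$ admits a small rainbow $D_G(x,y)$. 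There the main obstacle is exactly this simultaneous upper bound: producing one colouring with few colours that provides a rainbow $x$-$y$ cut for every pair $x,y$, via the same kind of extended case analysis as in Section~2.
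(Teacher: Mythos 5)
Your argument is correct, but it is not the paper's argument. The paper reduces directly from the Feige--Kilian gap problem (deciding $\chi(G)\leq n^{\epsilon}$ versus $\alpha(G)<n^{\epsilon}$): it reuses the gadget of Theorem~\ref{hardbi} amplified by $k=|E|$ copies of $V$, and sandwiches $rvd(H)$ between $k\chi_f(G)+2|E|\geq kn/\alpha(G)+2|E|$ and $k\chi(G)+2|E|$ by extracting a $k$-fold coloring of $G$ from an $rvd$-coloring of $H$ via Lemma~\ref{rvddifcolor}. You instead treat the inapproximability of $\chi_i$ on bipartite graphs \cite{JXZ} as a black box and build a linear-size twinning blowup $G^{*}$ on which $rvd$ and $\chi_i$ coincide exactly: the lower bound $rvd(G^{*})\geq\chi_i(G^{*})$ comes from Lemma~\ref{rvddifcolor} because every pair of vertices of $G^{*}$ with a common neighbour has two of them, and the upper bound is the quoted inequality $rvd\leq\chi_i$; the identification of the common-neighbour graph of $G^{*}$ with $G^{[1]}[K_2]$ and the bound $\chi(G^{[1]})\leq\chi_2(G^{[1]})\leq 2\chi(G^{[1]})$ then give $\chi_i(G)\leq rvd(G^{*})\leq 2\chi_i(G)$, and since $|V(G^{*})|=2|V(G)|$ the exponent $\tfrac13-\epsilon$ survives up to constants absorbed by shrinking $\epsilon$. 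The reduction to connected instances and the handling of small $n$ are both legitimate as you state them. What your route buys is brevity and a structural insight (on a dense-enough subclass of bipartite graphs $rvd=\chi_i$, so hardness is inherited); what it costs is self-containment -- the Feige--Kilian machinery is merely off-loaded into \cite{JXZ} -- and portability: the paper's gadget adapts verbatim to split graphs in Theorems~\ref{hardsplit} and 3.6, whereas your reduction would additionally require the $\chi_i$-inapproximability result for split graphs, which the paper does not quote. If you write this up, do make the two verifications of your third paragraph explicit rather than asserted, since they carry the whole proof.
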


\begin{proof}
For a given graph $G$ and any fixed $\epsilon>0$,
the problem of deciding whether $\chi(G)\leq n^\epsilon$ or $\alpha(G)<n^\epsilon$ is not possible in polynomial time,
unless $ZPP=NP$, where $n$ is the order of $G$ \cite{FK}.

We replace $V$ in $\widetilde{G}$ from Theorem \ref{hardbi} with $k$ copies of $V$, denoted by $V_j$ ($j\in [k]$).
Assume that $V=\{v_1,v_2,\cdots,v_n\}$ and $V_j=\{v^j_1,v^j_2,\cdots,v^j_n\}$ ($j\in [k]$).
We construct a new graph $H$ from $G=(V,E)$ with $V(H)=\bigcup_{j=1}^{k}V_j\cup V_s\cup V_t$ and $E(H)=\{u^js_{uv},u^js'_{uv},v^js_{uv},v^js'_{uv}:uv\in E, j\in [k]\}\cup E(V_s,V_t)$,
where $V_s=\{s_{uv},s'_{uv}:uv\in E\}$
and $V_t=\{t_{uv},t'_{uv}:uv\in E\}$.
Similarly to the proof of Theorem \ref{hardbi},
we have $rvd(H)\leq k\cdot\chi(G)+2|E|$.

Let $c_H$ be an rvd-coloring of $H$.
Let $Q_{uv}=\bigcup_{j=1}^{k}u^j\cup\bigcup_{j=1}^{k}v^j$ for $uv\in E$.
Since any two vertices in $Q_{uv}$ have two common neighbors $s_{uv}$ and $s'_{uv}$,
$Q_{uv}$ is rainbow under $c_H$ by Lemma \ref{rvddifcolor}.
Let $c_G$ be a vertex-coloring of $G$ such that
$c_G(v_i)=\{c_H(v^1_i),c_H(v^2_i),\cdots,c_H(v^k_i)\}$ ($i\in [n]$).
Then for $uv\in E$, $c_G(u)$ is disjoint with $c_G(v)$.
So $c_G$ is a $k$-fold coloring of $G$.
Since any vertex in $\bigcup_{j=1}^{k}V_j$ and any vertex in $V_t$ have two common neighbors in $V_s$,
by Lemma \ref{rvddifcolor},
the colors of $\bigcup_{j=1}^{k}V_j$ are disjoint with the colors of $V_t$ under $c_H$.
So $c_G$ has at most $rvd(H)-2|E|$ colors.
We have $\chi_k(G)\leq rvd(H)-2|E|$.
Thus, we obtain
$$\frac{kn}{\alpha(G)}+2|E|\leq k\cdot\chi_f(G)+2|E|\leq\chi_k(G)+2|E|\leq rvd(H)\leq k\cdot\chi(G)+2|E|.$$

If $\chi(G)\leq n^\epsilon$,
we have $rvd(H)\leq kn^\epsilon+2|E|$.
If $\alpha(G)< n^\epsilon$,
we have $rvd(H)> kn^{1-\epsilon}+2|E|$.
Choose $k=|E|$.
For $n\geq 4^{\frac{1}{\epsilon}}$,
we obtain
$$\frac{kn^{1-\epsilon}+2|E|}{kn^\epsilon+2|E|}
=\frac{n^{1-\epsilon}+2}{n^\epsilon+2}
\geq \frac{1}{4}n^{1-2\epsilon}
\geq n^{1-3\epsilon}\geq (|E|n+4|E|)^{\frac{1}{3}(1-3\epsilon)}=N^{\frac{1}{3}-\epsilon},$$
where $N=|V(H)|$.

So if we can efficiently $N^{\frac{1}{3}-\epsilon}$-approximate the rvd-coloring of $H$ then it is possible to efficiently decide whether $\chi(G)\leq n^\epsilon$ or $\alpha(G)<n^\epsilon$.
\end{proof}

\begin{thm}\label{hardsplit}
RVD-Problem is NP-complete for split graphs.
\end{thm}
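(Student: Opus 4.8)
The plan is to follow the template of Theorem~\ref{hardbi}. First, RVD-Problem is in NP for split graphs: given a fixed $k$-coloring one verifies in polynomial time whether it is a rainbow vertex-disconnection coloring by searching, for each pair $x,y$, for a rainbow $x$-$y$ vertex-cut. For NP-hardness I would reduce from the proper coloring problem, which remains NP-complete for graphs $G$ with $\delta(G)\ge 2$: from an arbitrary instance, repeatedly delete vertices of degree at most $1$ (this does not affect whether $\chi(G)\le k$ when $k\ge 2$), ending either with the empty graph -- in which case $G$ is a forest and the answer is trivially affirmative -- or with a graph of minimum degree at least $2$, to which the construction below is applied. So assume $\delta(G)\ge 2$, whence $|V|\ge 3$ and $|E|\ge 3$.

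From $G=(V,E)$ I would build the split graph $\widetilde G$ as follows: keep $V$ as an independent set; for each edge $uv\in E$ add two new vertices $s_{uv},s'_{uv}$, delete the edge $uv$, and join each of $s_{uv},s'_{uv}$ to both $u$ and $v$ (the same local gadget as in Theorem~\ref{hardbi}); finally, letting $V_s=\{s_{uv},s'_{uv}:uv\in E\}$, add all edges inside $V_s$. Then $V(\widetilde G)=V\cup V_s$ with $V$ independent and $V_s$ a clique, so $\widetilde G$ is a split graph, it is connected (the clique $V_s$ together with the gadget edges at each $V$-vertex), and it is built in polynomial time. The claim to prove is $\chi(G)\le k$ if and only if $rvd(\widetilde G)\le k+2|E|$.

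For the forward direction, given a proper coloring $c:V\to[k]$ I would color $V$ by $c$ and $V_s$ rainbow with $2|E|$ new colors. Since $c(u)\ne c(v)$ for $uv\in E$ and the two palettes are disjoint, every open neighborhood of $\widetilde G$ is then rainbow: $N_{\widetilde G}(u)\subseteq V_s$ for $u\in V$, and $N_{\widetilde G}(s_{uv})=\{u,v\}\cup(V_s\setminus\{s_{uv}\})$. A coloring all of whose open neighborhoods are rainbow is a rainbow vertex-disconnection coloring -- for a pair $x,y$ take $S=N_{\widetilde G}(x)$ when $x\not\sim y$, and $S=N_{\widetilde G}(x)\setminus\{y\}$ when $x\sim y$, for which $S+y=N_{\widetilde G}(x)$ is rainbow and $x$ is isolated in $(\widetilde G-xy)-S$ -- so $rvd(\widetilde G)\le k+2|E|$. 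For the backward direction, let $\widetilde c$ be a rainbow vertex-disconnection coloring with at most $k+2|E|$ colors. Since $V_s$ is a clique of size $2|E|\ge 6$, any two of its vertices have at least two common neighbors in $V_s$, so by Lemma~\ref{rvddifcolor} $V_s$ is rainbow and uses exactly $2|E|$ colors. For $u\in V$ and any $s\in V_s$, the hypothesis $d_G(u)\ge 2$ guarantees that $u$ and $s$ share at least two common neighbors among the gadget vertices attached to $u$ (the $2d_G(u)\ge 4$ of them if $s$ is not incident to $u$, and the $2d_G(u)-1\ge 3$ others otherwise), so $\widetilde c(u)\ne\widetilde c(s)$ by Lemma~\ref{rvddifcolor}; hence the colors on $V$ are disjoint from those on $V_s$, and $\widetilde c$ uses at most $k$ colors on $V$. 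Finally $u,v$ have the common neighbors $s_{uv},s'_{uv}$ for each $uv\in E$, so $\widetilde c$ restricted to $V$ is proper and $\chi(G)\le k$.

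The crux is the backward direction: the reduction must force $V_s$ to be exactly rainbow (consuming $2|E|$ colors) and, at the same time, force every vertex of $V$ to avoid every color of $V_s$, and both forcings come solely from producing enough common neighbors for Lemma~\ref{rvddifcolor}; the restriction $\delta(G)\ge 2$ is exactly what secures the second of these, and one has to check (as above) that it does not cost NP-completeness of the source problem. If one prefers to reduce from unrestricted coloring, a clean alternative is to add a single extra clique vertex $w^*$ joined to all of $V$: then $w^*$ is a common neighbor of every $u\in V$ and every $s\in V_s$, which forces $\widetilde c(u)\notin\widetilde c(V_s\cup\{w^*\})$ as soon as $G$ has no isolated vertex, the target value becoming $k+2|E|+1$ -- but this requires a slightly longer check in the forward direction of the pairs involving the universal vertex $w^*$ (separating $w^*$ from a gadget vertex $s_{uv}$ uses the set $\{u,v\}\cup(V_s\setminus\{s_{uv}\})$, which is rainbow precisely because $c(u)\ne c(v)$). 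Together with membership in NP this proves the theorem.
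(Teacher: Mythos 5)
Your proposal is correct and follows the same overall strategy as the paper: membership in NP, a reduction from proper coloring, an edge gadget whose vertices form a clique $V_s$, and Lemma~\ref{rvddifcolor} to force $V_s$ rainbow and its colors disjoint from those of $V$. The one substantive difference is the gadget size. The paper attaches \emph{three} vertices $s_{uv},s'_{uv},s''_{uv}$ to each edge and targets $rvd(\widetilde G)\le k+3|E|$; the point of the third vertex is that for any $x\in V$ and any $y\in V_s$, even when $y$ is itself one of the gadget vertices of an edge at $x$, two of the three remain as common neighbors of $x$ and $y$, so the color-disjointness of $V$ and $V_s$ is forced with no hypothesis on $G$ (only the degenerate case $|E|=1$ needs a separate one-line argument). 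You instead reuse the two-vertex gadget from Theorem~\ref{hardbi}, where this forcing genuinely fails at degree-one vertices of $G$, and you repair it by preprocessing the source instance to $\delta(G)\ge 2$; your justification that this preserves NP-completeness of $k$-coloring (iteratively deleting degree-$\le 1$ vertices does not change whether $\chi(G)\le k$ for $k\ge 2$) is correct, and with $\delta(G)\ge 2$ your count of common neighbors ($2d_G(u)-1\ge 3$ in the worst case) makes Lemma~\ref{rvddifcolor} applicable exactly as needed. So you trade a slightly larger gadget for a preprocessing step; both are valid, the paper's version being marginally cleaner in that it works for arbitrary instances. Your explicit observation that a coloring with all open neighborhoods rainbow is a rainbow vertex-disconnection coloring (essentially $rvd\le\chi_i$) is a correct and slightly more careful justification of the forward direction than the paper gives.
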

\begin{proof}
Given a fixed $k$-vertex-coloring $c_0$ of a split graph, it is polynomial time to vertify whether it is a rainbow vertex-disconnection coloring.
So RVD-Problem is in NP for split graphs.

We give a polynomial reduction from the proper coloring problem of $G=(V,E)$,
which is NP-complete for general graphs. We will construct a graph $\widetilde{G}$ from $G$ such that $\chi(G)\leq k$ if and only if $rvd(\widetilde{G})\leq k+3|E|$.

We construct $\widetilde{G}$ as follows.
For each edge $uv$ in $G$, add three vertices $s_{uv},s'_{uv},s''_{uv}$ and replace $uv$ with edges $us_{uv},us'_{uv},us''_{uv},vs_{uv},vs'_{uv},vs''_{uv}$.
Let $V_s=\{s_{uv},s'_{uv},s''_{uv}:uv\in E\}$.
Add edges such that $V_s$ forms a clique.
Then we obtain the graph $\widetilde{G}$ with $V(\widetilde{G})=V\cup V_s$
and $E(\widetilde{G})=\{us_{uv},us'_{uv},us''_{uv},vs_{uv},vs'_{uv},vs''_{uv}:uv\in E\}\cup \{uv:u,v\in V_s\}$. Obviously, $\widetilde{G}$ is a split graph with clique $G[V_s]$ and independent set $V$. We can construct it from $G$ in polynomial time.

If $\chi(G)\leq k$, then we give a proper coloring $c$ of $G$:
$V\rightarrow[k]$.
Let $\widetilde{c}$: $V(\widetilde{G})\rightarrow [k+3|E|]$ be a vertex-coloring of $\widetilde{G}$ as follows. For $v\in V$, $\widetilde{c}(v)=c(v)$.
Let $V_s$ be rainbow using colors $\{k+1,k+2,\cdots,k+3|E|\}$.
Then for $v\in V$, $N_{\widetilde{G}}(v)$ is rainbow.
For $s_{uv}\in V_s$, $s_{uv}$ has two neighbors $u$ and $v$ in $V$.
Since $c(u)\neq c(v)$, we have $\widetilde{c}(u)\neq \widetilde{c}(v)$.
So $N_{\widetilde{G}}(s_{uv})$ is rainbow.
Similarly, $N_{\widetilde{G}}(s'_{uv})$, $N_{\widetilde{G}}(s''_{uv})$ are rainbow.
Thus, $\widetilde{c}$ is a rainbow vertex-disconnection coloring of $\widetilde{G}$ and $rvd(\widetilde{G})\leq k+3|E|$.

Conversely, assume that $rvd(\widetilde{G})\leq k+3|E|$.
Let $\widetilde{c}$: $V(\widetilde{G})\rightarrow [k+3|E|]$ be a rainbow vertex-disconnection coloring of $\widetilde{G}$.
If $|E|=1$, assuming the edge is $pq$ in $G$, then $V_s$ is a $p$-$q$ vertex-cut.
So $V_s$ is rainbow.
Since any two vertices in $V_s$ have at least two common neighbors in $V_s$ for $|E|\geq 2$,
by Lemma \ref{rvddifcolor}, $V_s$ is rainbow.
For any vertex $x\in V$ and any vertex $y\in V_s$, assuming that $xz\in E$,
there are at least two common neighbors of $x$ and $y$ from $\{s_{xz},s'_{xz},s''_{xz}\}$. So the colors of $V$ in $\widetilde{G}$ are disjoint with the colors of $V_s$ in $\widetilde{G}$.
Let $\widetilde{c}_V$ be the coloring of $G$ by restricting $\widetilde{c}$ to $V$. Then $\widetilde{c}_V$ has at most $k$ colors.
For any two adjacent vertices $u$ and $v$ in $G$,
since $u$ and $v$ have three common neighbors $s_{uv},s'_{uv},s''_{uv}$ in $\widetilde{G}$,
we have $\widetilde{c}_V(u)\neq \widetilde{c}_V(v)$ by Lemma \ref{rvddifcolor}.
So $\widetilde{c}_V$ is a proper coloring of $G$ and $\chi(G)\leq k$.
\end{proof}

\begin{thm}
If $ZPP\neq NP$, then,
for every $\epsilon>0$,
it is not possible to efficiently approximate $rvd(G)$ within a factor of $n^{\frac{1}{3}-\epsilon}$,
for any split graph $G$.
\end{thm}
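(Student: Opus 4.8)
The plan is to repeat the blow-up argument used for bipartite graphs, but starting from the split-graph gadget of Theorem~\ref{hardsplit} instead of the bipartite one. Take the graph $\widetilde{G}$ built in the proof of Theorem~\ref{hardsplit} --- for each edge $uv\in E(G)$ we introduced three vertices $s_{uv},s'_{uv},s''_{uv}$, made $V_s=\{s_{uv},s'_{uv},s''_{uv}:uv\in E\}$ a clique, and joined each of $s_{uv},s'_{uv},s''_{uv}$ to both $u$ and $v$ --- and replace the independent set $V$ by $k$ disjoint copies $V_1,\dots,V_k$, with $V_j=\{v_1^j,\dots,v_n^j\}$, where $v_i^j$ is joined to exactly those $s$-vertices that $v_i$ was joined to. Call the resulting graph $H$. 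Since $V_s$ is still a clique and $\bigcup_j V_j$ is still independent, $H$ is a split graph with $N=|V(H)|=kn+3|E|$ vertices, constructible from $G$ in polynomial time.

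Following the forward direction of Theorem~\ref{hardsplit} essentially verbatim, a proper $\chi(G)$-coloring of $G$ lifts to a rainbow vertex-disconnection coloring of $H$ that uses at most $\chi(G)$ colors on $\bigcup_j V_j$ together with $3|E|$ fresh colors making $V_s$ rainbow, so $rvd(H)\le k\cdot\chi(G)+3|E|$. Conversely, let $c_H$ be an $rvd$-coloring of $H$. For each $uv\in E$ the set $Q_{uv}=\bigcup_{j}\{u^j,v^j\}$ has the two common neighbors $s_{uv},s'_{uv}$ in $V_s$ (when $|E|=1$ one argues directly that $V_s$ is a vertex-cut, exactly as in Theorem~\ref{hardsplit}), hence $Q_{uv}$ is rainbow by Lemma~\ref{rvddifcolor}; therefore $c_G(v_i):=\{c_H(v_i^1),\dots,c_H(v_i^k)\}$ is a well-defined set of size $k$, and for an edge $v_iv_j$ the rainbowness of $Q_{v_iv_j}$ forces $c_G(v_i)\cap c_G(v_j)=\emptyset$, so $c_G$ is a $k$-fold coloring of $G$. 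Moreover any vertex of $\bigcup_j V_j$ and any vertex of $V_s$ share two common neighbours in $V_s$, so by Lemma~\ref{rvddifcolor} the colour set of $\bigcup_j V_j$ is disjoint from the $3|E|$ colours on the rainbow clique $V_s$, giving $\chi_k(G)\le rvd(H)-3|E|$. Combining these with $\chi_f(G)\ge |V(G)|/\alpha(G)$ yields
$$\frac{kn}{\alpha(G)}+3|E|\le k\cdot\chi_f(G)+3|E|\le \chi_k(G)+3|E|\le rvd(H)\le k\cdot\chi(G)+3|E|.$$

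Now invoke \cite{FK}: unless $ZPP=NP$, one cannot decide in polynomial time whether $\chi(G)\le n^{\epsilon}$ or $\alpha(G)<n^{\epsilon}$. If $\chi(G)\le n^{\epsilon}$ then $rvd(H)\le kn^{\epsilon}+3|E|$, while if $\alpha(G)<n^{\epsilon}$ then $rvd(H)> kn^{1-\epsilon}+3|E|$. Choosing $k=|E|$ and $n\ge 4^{1/\epsilon}$, the ratio $\frac{kn^{1-\epsilon}+3|E|}{kn^{\epsilon}+3|E|}=\frac{n^{1-\epsilon}+3}{n^{\epsilon}+3}$ is bounded below by $\tfrac14 n^{1-2\epsilon}\ge n^{1-3\epsilon}\ge (|E|n+3|E|)^{\frac13(1-3\epsilon)}=N^{\frac13-\epsilon}$, so an efficient $N^{\frac13-\epsilon}$-approximation of $rvd(H)$ would decide the dichotomy of \cite{FK}, a contradiction. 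I expect the only genuinely new work to be this final gap estimate with the additive $3|E|$ term (rather than $2|E|$) and the verification that $k=|E|$ still produces a multiplicative gap of at least $N^{\frac13-\epsilon}$ after the substitution $N=|E|n+3|E|$; everything else is a line-by-line transcription of the bipartite proof with $V_t$ suppressed and $V_s$ promoted to a clique, together with the standard reduction to instances $G$ with no isolated vertices.
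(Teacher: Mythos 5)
Your proposal is correct and follows essentially the same route as the paper: the same $k$-copy blow-up of the split-graph gadget, the same chain $\frac{kn}{\alpha(G)}+3|E|\le \chi_k(G)+3|E|\le rvd(H)\le k\chi(G)+3|E|$ via Lemma \ref{rvddifcolor} and the $k$-fold/fractional chromatic number, and the same choice $k=|E|$ in the final gap estimate (your constants $\tfrac14$ and $n\ge 4^{1/\epsilon}$ versus the paper's $\tfrac16$ and $n\ge 6^{1/\epsilon}$ are an immaterial difference).
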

\begin{proof}
We replace $V$ in $\widetilde{G}$ from Theorem \ref{hardsplit} with $k$ copies of $V$, denoted by $V_j$ ($j\in [k]$).
Assume that $V=\{v_1,v_2,\cdots,v_n\}$ and $V_j=\{v^j_1,v^j_2,\cdots,v^j_n\}$ ($j\in [k]$).
We construct a new graph $H$ from $G=(V,E)$ with $V(H)=\bigcup_{j=1}^{k}V_j\cup V_s$ and $E(H)=\{u^js_{uv},u^js'_{uv},u^js''_{uv},v^js_{uv},v^js'_{uv},v^js''_{uv}:uv\in E, j\in [k]\}\cup \{uv:u,v\in V_s\}$,
where $V_s=\{s_{uv},s'_{uv},s''_{uv}:uv\in E\}$.
Similarly to the proof of Theorem \ref{hardsplit},
we have $rvd(H)\leq k\cdot\chi(G)+3|E|$.

Let $c_H$ be an rvd-coloring of $H$.
Let $Q_{uv}=\bigcup_{j=1}^{k}u^j\cup\bigcup_{j=1}^{k}v^j$ for $uv\in E$.
Since any two vertices in $Q_{uv}$ have three common neighbors $s_{uv}$, $s'_{uv}$ and $s''_{uv}$,
$Q_{uv}$ is rainbow under $c_H$ by Lemma \ref{rvddifcolor}.
Let $c_G$ be a vertex-coloring of $G$ such that
$c_G(v_i)=\{c_H(v^1_i),c_H(v^2_i),\cdots,c_H(v^k_i)\}$ ($i\in [n]$).
Then for $uv\in E$, $c_G(u)$ is disjoint with $c_G(v)$.
So $c_G$ is a $k$-fold coloring of $G$.
Since any vertex in $\bigcup_{j=1}^{k}V_j$ and any vertex in $V_s$ have at least two common neighbors in $V_s$,
by Lemma \ref{rvddifcolor},
the colors of $\bigcup_{j=1}^{k}V_j$ are disjoint with the colors of $V_s$ under $c_H$.
So $c_G$ has at most $rvd(H)-3|E|$ colors.
We have $\chi_k(G)\leq rvd(H)-3|E|$.
Thus, we obtain
$$\frac{kn}{\alpha(G)}+3|E|\leq k\cdot\chi_f(G)+3|E|\leq\chi_k(G)+3|E|\leq rvd(H)\leq k\cdot\chi(G)+3|E|.$$

If $\chi(G)\leq n^\epsilon$,
we have $rvd(H)\leq kn^\epsilon+3|E|$.
If $\alpha(G)< n^\epsilon$,
we have $rvd(H)> kn^{1-\epsilon}+3|E|$.
Choose $k=|E|$.
For $n\geq 6^{\frac{1}{\epsilon}}$,
we obtain
$$\frac{kn^{1-\epsilon}+3|E|}{kn^\epsilon+3|E|}
=\frac{n^{1-\epsilon}+3}{n^\epsilon+3}
\geq \frac{1}{6}n^{1-2\epsilon}
\geq n^{1-3\epsilon}\geq (|E|n+3|E|)^{\frac{1}{3}(1-3\epsilon)}=N^{\frac{1}{3}-\epsilon},$$
where $N=|V(H)|$.

So if we can efficiently $N^{\frac{1}{3}-\epsilon}$-approximate the rvd-coloring of $H$ then it is possible to efficiently decide whether $\chi(G)\leq n^\epsilon$ or $\alpha(G)<n^\epsilon$.
\end{proof}

\section*{Conflict of interest}
The authors declare that they have no conflict of interest.

\section*{Data availability statement}
My manuscript has no associated data.

\end{document}